\documentclass[reqno]{amsart}
\usepackage{amsmath, amsthm, amscd, amssymb, amsfonts, amsbsy}
\usepackage{latexsym, color, enumerate}
\usepackage{mathrsfs}
\usepackage{pxfonts}
\usepackage{bbm}
\usepackage{mathtools}
\usepackage{graphicx}
\usepackage{tikz}
\usetikzlibrary{arrows.meta,decorations.pathreplacing}
\usepackage{enumerate}
\usepackage{todonotes}
\usepackage{marginnote}

\theoremstyle{plain}
\newtheorem{theorem}[equation]{Theorem}
\newtheorem{lemma}[equation]{Lemma}
\newtheorem{corollary}[equation]{Corollary}

\theoremstyle{definition}

\theoremstyle{remark}
\newtheorem{remark}[equation]{Remark}

\newcommand{\dist}{\operatorname{dist}}
\newcommand{\diam}{\operatorname{diam}}

\numberwithin{equation}{section}

\providecommand{\abs}[1]{\lvert#1\rvert}

\providecommand{\norm}[1]{\lVert#1\rVert}

\renewcommand{\vec}[1]{\boldsymbol{#1}}

\begin{document}
\title[Boundary blow-up solutions]
{Boundary blow-up solutions: gradient asymptotics and uniqueness}

\author[S. Kim]{Seick Kim}
\address[S. Kim]{Department of Mathematics, Yonsei University, 50 Yonsei-Ro, Seodaemun-gu, Seoul 03722, Republic of Korea}
\email{kimseick@yonsei.ac.kr}
\thanks{S. Kim is supported by the National Research Foundation of Korea (NRF) under agreement NRF-2022R1A2C1003322.
S. Kim is affiliated with the Intelligent Computational Science Institute (IN2CSI), Yonsei University, 50 Yonsei-Ro, Seodaemun-gu, Seoul 03722, Republic of Korea.}

\subjclass[2020]{Primary 35J91; Secondary 35A02, 35B40}
\keywords{Boundary blow-up, Large solution, Uniqueness}

\begin{abstract}
Let $\Omega\subset\mathbb R^n$ be a bounded domain, and let $f$ be a nonnegative, nondecreasing function satisfying the Keller--Osserman condition. We study boundary blow-up solutions of $\Delta u=f(u)$ in $\Omega$. Although existence is classical, uniqueness under these assumptions is known in balls but remains open even for smooth convex domains. We identify the normalized gradient $Q_u=|\nabla u|^2/(2F(u))$, $F'=f$, as a quantity governing uniqueness.
Under a structural condition on $f$, a boundary blow-up solution $u$ is unique if $\limsup_{x\to\partial\Omega}Q_u(x)\le 1$, without any regularity assumption on $\partial \Omega$. 
For $C^{1,1}$ domains, assuming a growth condition on $f$, we prove $Q_u(x)\to 1$ for every boundary blow-up solution and hence obtain uniqueness under the structural condition. For convex domains, we prove  $Q_u\le 1$ for the minimal boundary blow-up solution and obtain uniqueness when $\sqrt F$ is eventually convex, without imposing any additional boundary regularity.
\end{abstract}
\maketitle

\section{Introduction}		
Let $f:\mathbb R\to[0,\infty)$ be locally Lipschitz continuous and nondecreasing, and suppose that it satisfies the Keller--Osserman condition
\begin{equation}\label{eq1.1}
\int^\infty\frac{dt}{\sqrt{F(t)}}<\infty,
\end{equation}
where $F$ is a primitive of $f$ that is positive for all sufficiently large $t$. Here and throughout, we suppress the lower limit when it is immaterial. The Keller--Osserman condition implies $F(t)\to\infty$. Since any two primitives of $f$ differ by a constant, they are eventually positive and comparable; hence the condition is independent of the choice of $F$.

These are our standing assumptions on $f$. Typical examples are $f(t)=t_+^p$ with $p>1$, where $t_+:=\max\{t,0\}$, and $f(t)=e^{ct}$ with $c>0$.

Let $\Omega\subset\mathbb{R}^n$ be a bounded domain.
We study the \emph{boundary blow-up problem}
\begin{equation}\label{eq1.2}
\begin{aligned}
\Delta u &= f(u) && \text{in }\;\Omega,\\
u(x) &\to \infty && \text{as }\;d(x):=\operatorname{dist}(x,\partial\Omega)\to0.
\end{aligned}
\end{equation}
A solution of \eqref{eq1.2} is called a \emph{boundary blow-up solution} or a \emph{large solution}.

Keller \cite{K} and Osserman \cite{O} established the basic existence theory and identified \eqref{eq1.1} as the natural growth condition for boundary blow-up.
Uniqueness, however, has proved substantially more delicate.
Various uniqueness results are known under additional assumptions; see, among others, Bandle and Marcus \cite{BM1,BM2}, Lazer and McKenna \cite{LM}, Marcus and V\'eron \cite{MV1,MV2,MV3}, and Dong, Kim, and Safonov \cite{DKS2008}.
For general nonlinearities under only the standing assumptions, uniqueness is known in balls \cite{CD2010, RW}, where radial symmetry reduces the problem to an ODE, but, to the best of our knowledge, remains open beyond balls, even in smooth convex domains such as ellipsoids.

A uniqueness result particularly relevant here is due to Costin, Dupaigne, and Goubet \cite{CDG2012}, who proved uniqueness in $C^3$ mean-convex domains when $\sqrt{F}$ is convex on $[t_0,\infty)$ for some $t_0\in\mathbb R$.
Here mean-convex means that the boundary has nonnegative mean curvature.
A key ingredient is the gradient bound
\[
\frac{\abs{\nabla u}^2}{2F(u)}\le1,
\]
obtained from the boundary geometry through a maximum principle argument.

The purpose of this paper is to isolate a uniqueness mechanism related to \cite{CDG2012} but independent of the geometry behind the gradient estimate.
For a boundary blow-up solution $u$, define
\[
Q_u(x):=\frac{|\nabla u(x)|^2}{2F(u(x))}.
\]
Our first main result is a uniqueness theorem on an arbitrary bounded domain:
if a boundary blow-up solution satisfies
\begin{equation}\label{eq1.3}
\limsup_{x\to\partial\Omega}Q_u(x)\le1,
\end{equation}
then, under the structural condition below, it is unique.
No regularity of $\partial\Omega$ is required.
This formulation separates uniqueness from the derivation of the gradient bound.
Once \eqref{eq1.3} is established by geometric or analytic means, the same comparison argument applies without further boundary information.

The proof uses a one-parameter family of transforms.
For $0<\alpha\le1$, define
\[
h_\alpha(t):=\int_t^\infty F(s)^{-\alpha/2}\,ds.
\]
Whenever this integral is finite,
\[
\Delta h_\alpha(u) = F(u)^{-\alpha/2}f(u)(\alpha Q_u-1).
\]
If $0<\alpha<1$, \eqref{eq1.3} gives $Q_u<1/\alpha$ near $\partial\Omega$.
Theorem~\ref{thm3.2} assumes that $F^{-\alpha/2}f$ is nondecreasing; together with $1-\alpha Q_u \ge0$, this yields uniqueness.

Theorem~\ref{thm3.9} shows that eventual monotonicity suffices: uniqueness holds if, for some  $0<\alpha<1$ and $t_0\in\mathbb R$,
\begin{equation}\label{eq1.5}
\int_{t_0}^\infty F(t)^{-\alpha/2}\,dt<\infty,
\qquad
t\mapsto F(t)^{-\alpha/2}f(t)\quad\text{is nondecreasing on }\;[t_0,\infty).
\end{equation}

The conditions in \eqref{eq1.5} are not nested in $\alpha$.
If $0<\alpha_1<\alpha_2\le1$, then
\[
\int^\infty F^{-\alpha_1/2}<\infty \implies \int^\infty F^{-\alpha_2/2}<\infty,
\]
whereas
\[
F^{-\alpha_1/2}f =(F^{-\alpha_2/2}f) \,F^{(\alpha_2-\alpha_1)/2}
\]
shows that monotonicity for $\alpha_2$ implies monotonicity for $\alpha_1$.
Thus, as $\alpha$ increases, the integrability condition in \eqref{eq1.5} weakens while the monotonicity condition strengthens.
For $\alpha=1$, the integrability condition is precisely the Keller--Osserman condition.

We next give a setting in which \eqref{eq1.3} follows from the boundary asymptotics of the gradient.
Suppose that $\Omega$ is a bounded $C^{1,1}$ domain and
\begin{equation}\label{eq1.6}
\limsup_{t\to\infty}\psi(t) \, \frac{f(t)}{\sqrt{F(t)}}<\infty,	\qquad
\psi(t):=\int_t^\infty\frac{ds}{\sqrt{2F(s)}}.
\end{equation}
For any boundary blow-up solution $u$, set $v:=\psi(u)$.
Theorem~\ref{thm4.8} gives
\[
v \in C^1(\overline\Omega), \qquad
v=0,\quad \nabla v=\vec n \;\text{ on }\;\partial\Omega,
\]
where $\vec n$ denotes the inward unit normal. Hence
\[
Q_u(x)=\abs{\nabla v(x)}^2 \to1\qquad\text{as }x\to\partial\Omega.
\]
If \eqref{eq1.5} holds for some $0<\alpha<1$, Theorem~\ref{thm3.9} yields uniqueness.

At the endpoint  $\alpha=1$, the boundary limit does not determine the sign of $1-Q_u$.
For bounded $C^3$ mean-convex domains, a maximum principle argument gives $Q_u\le1$;
under the eventual convexity of $\sqrt F$, this yields uniqueness and recovers \cite{CDG2012}.
For arbitrary bounded convex domains, the same estimate follows by approximation and again yields uniqueness, without any boundary regularity.

Thus the uniqueness argument separates into two parts: control of $Q_u$ and a comparison principle for the transformed solutions.
It applies whenever the required control of $Q_u$ is available.

We next briefly compare our assumptions with existing uniqueness criteria, leaving the details to the Appendix.

One approach, originating with Marcus and V\'eron \cite{MV3} and developed by L\'opez-G\'omez and Maire \cite{LGM2017}, assumes superadditivity up to a constant:
\[
f(a+b)\ge f(a)+f(b)-C,\qquad a,b\ge 0,
\]
for some $C \ge 0$.
Under this assumption, uniqueness holds on $C^1$ domains satisfying a uniform interior sphere condition.
Lemma~\ref{lem6.10} shows that, for every $0<\alpha<1$, \eqref{eq1.5} need not imply superadditivity up to a constant, even for smooth strictly increasing $f$.
Hence our criterion is not subsumed by \cite{LGM2017}.
The examples in Lemma~\ref{lem6.10} also fall outside the criteria of \cite{ADR2015} and \cite{GM2009}.
The former assumes that $f(t)/t$ is nondecreasing for all sufficiently large $t$, while the latter assumes the same of $f(t)/t^p$ for some $p>1$, which in particular implies eventual monotonicity of $f(t)/t$.
Either assumption implies superadditivity up to a constant.

At the endpoint $\alpha=1$, superadditivity up to a constant does follow; see Lemma~\ref{prop2.17}.
Hence \cite{LGM2017} applies under its $C^1$ boundary and uniform interior sphere assumptions. Our result is different in that it applies to arbitrary bounded convex domains, including domains with nonsmooth boundary.

A different comparison concerns the increment function.
L\'opez-G\'omez, Maire, and V\'eron \cite{LGMV2020} instead considered the increment function
\[
g(\ell):=\inf_{s\ge0}\,\{f(s+\ell)-f(s)\},\qquad \ell\ge 0.
\]
They proved uniqueness on Lipschitz domains under suitable assumptions on $g$.
Lemma~\ref{prop2.10} shows that \eqref{eq1.5} controls only its tail behavior.
Indeed, \eqref{eq1.5} allows $f$ to be constant on a bounded interval of length $\ell$, in which case $g(\ell)=0$. 
Thus \eqref{eq1.5} implies the Keller--Osserman condition for $g$, but not the positivity assumption imposed in \cite{LGMV2020}.

Lemma~\ref{lem2.24} shows that \eqref{eq1.6} implies the profile condition
\[
\liminf_{t\to\infty}\frac{\psi(\beta t)}{\psi(t)}>1
\qquad (0<\beta<1),
\]
appearing in the work of Bandle and Ess\'en \cite{BE}; see also \cite{BM2, Ma}.
In this line of work, such profile conditions are used to describe the boundary asymptotics of $u$ in terms of the one-dimensional blow-up profile $\psi^{-1}$.
By contrast, in our analysis \eqref{eq1.6} is used to obtain the boundary regularity $\psi(u)\in C^1(\overline\Omega)$; see Theorem~\ref{thm4.8}.
The stronger assumption \eqref{eq1.6} also excludes the borderline family $f(t)=t(\log t)^p$, $p>2$, treated by Alarc\'on, D\'iaz, and Rey \cite{ADR2015}.
For other related results on uniqueness and boundary asymptotics, see, for example, \cite{dPL2002, DG2004, Kim, LG2006, PV2006}.
The scope of our criterion is illustrated by Lemma~\ref{lem6.10}, whose examples satisfy both \eqref{eq1.5} and \eqref{eq1.6}, and hence give nonlinearities $f$ for which the boundary blow-up problem has a unique solution in a $C^{1,1}$ domain, while not being covered by the uniqueness results discussed above.

Finally, Dong, Kim, and Safonov \cite{DKS2008} established uniqueness for more general elliptic operators, in some cases on substantially less regular domains than those considered here.
Their main results, however, are restricted to the model nonlinearities $f(t)=t_+^p$ for $p>1$ and  $f(t)=e^{ct}$ for $c>0$.
Both satisfy \eqref{eq1.5} for some $\alpha \in (0,1)$ and \eqref{eq1.6}.

The paper is organized as follows.
Section~\ref{sec2} collects preliminary results on existence and comparison for boundary blow-up solutions.
Section~\ref{sec3} proves uniqueness on arbitrary bounded domains under assumption \eqref{eq1.3}.
Section~\ref{sec4} establishes boundary gradient asymptotics in bounded $C^{1,1}$ domains, and derives the corresponding uniqueness result.
Section~\ref{sec5} treats uniqueness in convex domains, corresponding to the endpoint case $\alpha=1$.
The main results are Theorems~\ref{thm3.9}, \ref{thm4.8}, and \ref{thm5.3}.

\section{Preliminaries}\label{sec2}

In this section, we briefly discuss the existence results of Keller \cite{K},
Osserman \cite{O}, and Loewner and Nirenberg \cite{LN}. We also introduce
some terminology which will be used in the later parts of the paper.
We begin with a simple lemma.

\begin{lemma}[Comparison principle]		\label{lem2.1}
Let $\Omega\subset\mathbb R^n$ be a bounded domain.
Let $u,v\in C^2(\Omega)$ satisfy
\[
\Delta u\ge f(u), \qquad \Delta v\le f(v) \quad\text{in }\Omega.
\]
If $\liminf_{x\to\partial\Omega}\,(v-u)(x)\ge0$, then $v\ge u$ in $\Omega$.
\end{lemma}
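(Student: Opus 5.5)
The plan is to argue by contradiction, using the monotonicity of $f$ together with the fact that the open set where $u$ exceeds $v$ stays away from $\partial\Omega$. Suppose $u(x_0)>v(x_0)$ for some $x_0\in\Omega$. Put $w:=u-v\in C^2(\Omega)$; since $\liminf_{x\to\partial\Omega}(v-u)\ge0$, we have $\limsup_{x\to\partial\Omega} w\le 0$. Choose a small $\varepsilon$ with $0<\varepsilon<w(x_0)$ and consider the set
\[
D_\varepsilon:=\{x\in\Omega:\ w(x)>\varepsilon\}.
\]
This set is open and nonempty, since it contains $x_0$. Because $\limsup_{x\to\partial\Omega}w\le0<\varepsilon$, there is a neighborhood of $\partial\Omega$ in $\Omega$ on which $w<\varepsilon$. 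Hence $\overline{D_\varepsilon}$ is a compact subset of $\Omega$, and $w=\varepsilon$ on $\partial D_\varepsilon$.

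On $D_\varepsilon$ we have $u>v$, and since $f$ is nondecreasing,
\[
\Delta w=\Delta u-\Delta v\ge f(u)-f(v)\ge0 .
\]
So $w$ is subharmonic in $D_\varepsilon$ and continuous on the compact closure $\overline{D_\varepsilon}$. The weak maximum principle then gives $w\le\max_{\partial D_\varepsilon}w=\varepsilon$ in $D_\varepsilon$. This contradicts the definition of $D_\varepsilon$, on which $w>\varepsilon$. Therefore $u\le v$ throughout $\Omega$.

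The only delicate point is that the boundary hypothesis is a $\liminf$ rather than a pointwise inequality, and that neither $u$ nor $v$ is assumed continuous up to $\partial\Omega$. Both may well blow up there. We handle this by working at the level $\varepsilon>0$ instead of $0$: this forces the comparison region to be compactly contained in $\Omega$, where both functions are $C^2$. Note also that no strict monotonicity of $f$ is needed, because the argument only uses $f(u)-f(v)\ge0$ to obtain subharmonicity of $w$.
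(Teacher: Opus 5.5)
Your proposal is correct and follows the paper's proof: both argue by contradiction on the superlevel set $D_\varepsilon=\{u-v>\varepsilon\}\Subset\Omega$, use the monotonicity of $f$ to get $\Delta(u-v)\ge0$ there, and apply the weak maximum principle. The only difference is cosmetic: the paper subtracts $\varepsilon$ so the boundary value is $0$, while you keep $w=u-v$ with boundary value $\varepsilon$.
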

\begin{proof}
Suppose, to the contrary, that $u>v$ at some point in $\Omega$. For sufficiently small $\epsilon>0$,
\[
D_\epsilon:=\{u-v>\epsilon\}\ne\emptyset, \qquad \overline D_\epsilon \subset\Omega.
\]
Set $w:=u-v-\epsilon$.
Then $w=0$ on $\partial D_\epsilon$, and, since $u>v$ in $D_\epsilon$ and $f$ is nondecreasing,
\[
\Delta w \ge f(u)-f(v) \ge0 \qquad\text{in }\;D_\epsilon.
\]
The maximum principle gives $w\le0$ in $D_\epsilon$, a contradiction.
\end{proof}

\begin{remark}	\label{rmk2.2}
Let $\Omega_1,\Omega_2\subset\mathbb R^n$ be bounded domains with $\Omega_1\Subset\Omega_2$, and let $u_i$ be a boundary blow-up solution in $\Omega_i$, $i=1,2$. Then Lemma~\ref{lem2.1} gives
\[
u_1\ge u_2\qquad\text{in }\;\Omega_1.
\]
\end{remark}

The next theorem is due to Keller \cite{K}; see also Osserman \cite{O}.
\begin{theorem}	\label{thm2.5}
Let $u \in C^2(\Omega)$ satisfy $\Delta u=f(u)$ in a bounded domain $\Omega$.
There exists a continuous decreasing function $g:(0,\infty)\to\mathbb R$, depending only on $f$, such that
\[
\lim_{t\downarrow0}g(t)=\infty,\qquad u(x)\le g(d(x)), \qquad d(x):=\dist(x,\partial\Omega).
\]
\end{theorem}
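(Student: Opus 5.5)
The plan is the classical Keller--Osserman construction: build a radial boundary blow-up \emph{supersolution} on every ball, compare it with $u$ using Lemma~\ref{lem2.1}, and read off the bound $u(x)\le g(d(x))$.

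\textbf{Step 1: a radial large solution on each ball.} Fix $r>0$. I want a radial function $w_r$ on $B_r(0)$ with
\[
\Delta w_r\le f(w_r)\ \text{in } B_r(0),\qquad w_r\to\infty \ \text{at } \partial B_r(0).
\]
Since $f\ge0$ is nondecreasing, I would solve the ODE
\[
w''+\frac{n-1}{\rho}w'=f(w),\qquad w(0)=a,\quad w'(0)=0.
\]
The solution is increasing and convex in $\rho$, so $w''\ge \frac{1}{n}f(w)$ (this uses the standard identity $(\rho^{n-1}w')'=\rho^{n-1}f(w)$ and the monotonicity of $f$). Hence $w''\ge f(w)/n$. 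Multiplying by $w'$ and integrating gives $(w')^2\ge \frac{2}{n}\bigl(F(w)-F(a)\bigr)$.

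The Keller--Osserman condition \eqref{eq1.1} then shows that $w$ blows up at a finite radius $R(a)$, with
\[
R(a)\le \int_a^\infty \frac{\sqrt{n/2}\,dt}{\sqrt{F(t)-F(a)}} .
\]
Here I take $a$ large so that $F(a)>0$; near $t=a$ the integrand is integrable because $f(a)>0$ or by a small modification. Conversely $R(a)>0$, and $R$ depends continuously on $a$, with $R(a)\to0$ as $a\to\infty$. Replacing $f$ by a slightly perturbed version if needed, I can get every radius $r$ below some $r_0$. For each $r$ I thus obtain a radial large solution $W_r$ on $B_r$. Its minimum value $W_r(0)=:m(r)$ depends only on $f$, $n$ and $r$, is finite, and tends to $\infty$ as $r\downarrow0$ (otherwise a bounded initial value would blow up at arbitrarily small radius, which is impossible by local Lipschitz ODE theory).

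\textbf{Step 2: comparison.} For $x\in\Omega$ and any $r<d(x)$, the ball $B_r(x)$ is compactly contained in $\Omega$. There $u$ is bounded, while $W_r(\cdot-x)\to\infty$ at $\partial B_r(x)$. Lemma~\ref{lem2.1} on $B_r(x)$ (as in Remark~\ref{rmk2.2}) therefore gives $u\le W_r(\cdot-x)$, and in particular $u(x)\le m(r)$. Letting $r\uparrow d(x)$ and defining $g(t):=\inf_{r<t}m(r)$, suitably monotonized, yields $u(x)\le g(d(x))$.

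\textbf{Step 3: the properties of $g$.} To make $g$ continuous and strictly decreasing, I would replace it by a continuous strictly decreasing majorant, for example $g(t)+1/t$ smoothed. One subtlety is that $m(r)$ is monotone: larger balls give smaller values, again by Remark~\ref{rmk2.2}. For large $t$, where balls of radius $t$ may not be available from Step 1, I would use the monotone extension $g(t)=m(\min(t,r_0))+1/t$; this is harmless since $d(x)$ is bounded anyway.

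\textbf{Main obstacle.} The hardest step is Step 1: showing that radial blow-up occurs at every small radius with $m(r)\to\infty$ as $r\to0$. The difficulty is that $f$ may vanish on a half-line, so the ODE solution with small $a$ stays constant. To handle this I would start at large $a$ and use continuous dependence together with $R(a)\to0$; the explicit integral bound on $R(a)$ from \eqref{eq1.1} gives exactly this decay.
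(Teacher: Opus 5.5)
Your argument is correct. It is the classical Keller construction: a radial ODE solution with $w''\ge f(w)/n$, a finite blow-up radius from the Keller--Osserman condition, and comparison on interior balls. The paper gives no proof of its own here; it cites Keller and Osserman, whose argument runs this way.

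When you write it out, two steps need a line of justification:

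\begin{enumerate}
\item[(i)] \emph{The decay $R(a)\to0$.} This is not immediate from the bound, because of the $-F(a)$ in the integrand. Split $\int_a^\infty (F(t)-F(a))^{-1/2}\,dt$ at $t=2a$. On $[a,2a]$ use $F(t)-F(a)\ge f(a)(t-a)$. For $t\ge 2a$ use $F(t)-F(a)\ge \tfrac12\bigl(F(t)-F(0)\bigr)$, which follows from monotonicity of $f$. Finally use $f(a)/a\to\infty$, which follows from the Keller--Osserman condition.
\item[(ii)] \emph{Continuity of $R$.} You assert this without proof, and it is unnecessary, as is the vague ``perturbed $f$'' remark. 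By ODE comparison $R$ is nonincreasing. Then $g(t):=\inf\{a: R(a)<t\}$, replaced by a continuous strictly decreasing majorant, already gives $u(x)\le g(d(x))$.
\end{enumerate}
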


Using the preceding estimate, Keller proved the existence of a boundary blow-up solution. Let $\Omega$ be a regular domain, for instance a Lipschitz domain. By the method of subsolutions and supersolutions (see, e.g., \cite[Sec.~9.3]{E}), for each $m\ge1$ there exists a unique solution $u_m\in C^2(\Omega) \cap C(\overline \Omega)$ of
\[
\left\{
\begin{aligned}
\Delta u_m &=f(u_m)&&\text{in }\;\Omega,\\
u_m&=m &&\text{on }\;\partial \Omega,
\end{aligned}
\right.
\]
The comparison principle gives
\[
u_m\le u_{m+1}\qquad\text{in }\;\Omega.
\]
By Theorem~\ref{thm2.5}, $u_m(x)\le g(d(x))$ uniformly in $m$. Let
\[
u(x):=\lim_{m\to\infty}u_m(x).
\]
By standard elliptic regularity (see, e.g., \cite{GT}), $u\in C^2(\Omega)$ and $\Delta u=f(u)$ in $\Omega$.

For every $m$, $u\ge u_m$ in $\Omega$ and $u_m=m$ on $\partial\Omega$, so $\liminf_{x\to\partial\Omega}u(x)\ge m$. Since $m$ is arbitrary, $u(x)\to\infty$ as $x\to\partial\Omega$. Thus $u$ is a boundary blow-up solution.

Moreover, if $v$ is any boundary blow-up solution, then the comparison principle gives $u_m\le v$ in $\Omega$ for every $m$, and hence $u\le v$.
We call $u$ the \textit{minimal boundary blow-up solution}.

Using the minimal boundary blow-up solution in a containing ball, we obtain a common lower bound for all boundary blow-up solutions.

\begin{lemma}[Lower bound]\label{lem2.3}
Let $\Omega\subset\mathbb R^n$ be a bounded domain. Choose a ball $B$ with $\Omega\Subset B$, and let $U_B$ be the minimal boundary blow-up solution in $B$. Set
\[
t_0:=\inf_{\Omega} U_B.
\]
Then every boundary blow-up solution $u$ in $\Omega$ satisfies
\[
u\ge U_B\quad\text{in }\;\Omega, \qquad\text{and hence}\qquad u\ge t_0 \quad\text{in }\;\Omega.
\]
\end{lemma}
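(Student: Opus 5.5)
The plan is to obtain the lower bound directly from Remark~\ref{rmk2.2}, which rests on the comparison principle, Lemma~\ref{lem2.1}.

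First, $U_B$ exists. A ball is a regular domain, so the construction after Theorem~\ref{thm2.5} applies and produces the minimal boundary blow-up solution $U_B\in C^2(B)$ with $\Delta U_B=f(U_B)$ in $B$.

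Next, fix any boundary blow-up solution $u$ in $\Omega$. Apply Lemma~\ref{lem2.1} on $\Omega$ with $u$ as the supersolution ($\Delta u\le f(u)$) and $U_B|_\Omega$ as the subsolution ($\Delta U_B\ge f(U_B)$). The only thing to check is the boundary condition. Since $\Omega\Subset B$, the set $\overline\Omega$ is a compact subset of $B$, and $U_B$ is continuous there, so $U_B\le M:=\max_{\overline\Omega}U_B<\infty$ on $\Omega$. Meanwhile $u(x)\to\infty$ as $x\to\partial\Omega$. Therefore
\[
\liminf_{x\to\partial\Omega}(u-U_B)(x)\ \ge\ \liminf_{x\to\partial\Omega}u(x)-M=\infty\ \ge 0,
\]
and Lemma~\ref{lem2.1} gives $u\ge U_B$ in $\Omega$. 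This is exactly Remark~\ref{rmk2.2} with $\Omega_1=\Omega$, $\Omega_2=B$.

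Finally, $u\ge U_B\ge \inf_\Omega U_B=t_0$ in $\Omega$. The number $t_0$ is finite because $U_B$ is continuous on the compact set $\overline\Omega\subset B$. The point of the statement is that $t_0$ depends only on $\Omega$, $B$ and $f$, not on $u$.

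There is no real obstacle. The only points needing care are that $U_B$ is bounded on $\overline\Omega$, which is where $\Omega\Subset B$ is used, and that $t_0>-\infty$. Both follow from compactness and continuity.
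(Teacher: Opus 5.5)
Your proof is correct and follows the paper's argument: $\Omega\Subset B$ makes $U_B$ bounded on $\overline\Omega$, and together with $u\to\infty$ at $\partial\Omega$ this gives $\liminf_{x\to\partial\Omega}(u-U_B)\ge0$, so Lemma~\ref{lem2.1} yields $u\ge U_B$. Your extra remarks on the existence of $U_B$ and the finiteness of $t_0$ are valid but add nothing the paper needs.
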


\begin{proof}
Since $\Omega\Subset B$, the function $U_B$ is bounded on $\overline\Omega$, whereas $u(x)\to\infty$ as $x\to\partial\Omega$.
Thus
\[
\liminf_{x\to\partial\Omega}\,(u-U_B)(x)\ge0,
\]
and the comparison principle gives $u \ge U_B$ in $\Omega$.
\end{proof}

Loewner and Nirenberg \cite{LN} introduced another important solution, called a \textit{maximal solution}, which need not be a boundary blow-up solution but can be constructed in any bounded domain $\Omega$.
Let $\{\Omega_m\}_{m=1}^\infty$ be an exhaustion of $\Omega$ by smooth subdomains; that is,
\[
\Omega_m\Subset\Omega_{m+1}\Subset\Omega, \qquad \bigcup_{m=1}^\infty\Omega_m=\Omega.
\]
For each $m\ge1$, let $u_m$ be the minimal boundary blow-up solution in $\Omega_m$, and let $U$ be the minimal boundary blow-up solution in a ball containing $\overline\Omega$.
By Remark~\ref{rmk2.2}, $\{u_m\}_{m=1}^\infty$ is decreasing and bounded below by $U$.
Hence
\[
u:=\lim_{m\to\infty}u_m
\]
exists and solves $\Delta u = f(u)$ in $\Omega$.

Moreover, if $v$ satisfies $\Delta v = f(v)$ in $\Omega$, then the comparison principle gives $u_m\ge v$ for every $m$. Thus $u\ge v$, so $u$ is the maximal solution.

For $0< \alpha \le 1$, whenever the integral converges, set
\[
h_\alpha(t):=\int_t^\infty F(s)^{-\alpha/2}\,ds.
\]
For $\alpha=1$, we have $h_1=\sqrt{2}\,\psi$.
We will also assume that
\[
t\mapsto F(t)^{-\alpha/2}f(t)
\]
is nondecreasing.
The following observation shows that this monotonicity is preserved when the additive constant in $F$ is increased.

\begin{lemma}	\label{lem2.8}
If $F^{-\alpha/2}f$ is nondecreasing on an interval $I$, then $(F+C)^{-\alpha/2}f$ is nondecreasing on $I$ for every $C\ge0$.
\end{lemma}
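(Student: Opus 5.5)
The idea is to write the new function as the old one times a correction factor that is itself nondecreasing on $I$:
\[
(F+C)^{-\alpha/2}f=\bigl(F^{-\alpha/2}f\bigr)\cdot\Bigl(\frac{F}{F+C}\Bigr)^{\alpha/2}.
\]
This requires $F>0$ on $I$, which I take to be implicit, since $F^{-\alpha/2}$ has to make sense there.

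The factor $F/(F+C)=1-C/(F+C)$ is nondecreasing. Indeed, $F$ is nondecreasing because $F'=f\ge0$, and $C\ge0$. Raising it to the power $\alpha/2>0$ preserves monotonicity, so $(F/(F+C))^{\alpha/2}$ is nondecreasing and positive. By hypothesis, $F^{-\alpha/2}f$ is nondecreasing and nonnegative, since $f\ge0$.

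The product of two nonnegative nondecreasing functions is nondecreasing. To see this, take $s<t$ in $I$, with $a(s)\le a(t)$ and $b(s)\le b(t)$, all values nonnegative. Then
\[
a(s)b(s)\le a(t)b(s)\le a(t)b(t).
\]
This gives the claim.

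The only point needing care is positivity of $F$ on $I$. If $F$ could vanish or be negative somewhere on $I$, the hypothesis itself would be meaningless there. In the paper's usage $I$ lies where $F>0$, for example $[t_0,\infty)$ with $F(t_0)>0$.

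Alternatively, when $f$ is locally Lipschitz one can differentiate directly:
\[
\frac{d}{dt}\Bigl[(F+C)^{-\alpha/2}f\Bigr]=(F+C)^{-\alpha/2}\Bigl(f'-\frac{\alpha}{2}\frac{f^2}{F+C}\Bigr).
\]
Monotonicity of $F^{-\alpha/2}f$ means $f'\ge \frac{\alpha}{2}f^2/F\ge\frac{\alpha}{2}f^2/(F+C)$ a.e. However, the product argument above avoids any regularity issue and is the one I would use.
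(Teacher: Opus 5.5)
Your proof is correct and is essentially the paper's argument: both factor $(F+C)^{-\alpha/2}f$ as a nonnegative nondecreasing function times a power of the nondecreasing ratio $F/(F+C)$. The only difference is order: the paper multiplies $f^{2/\alpha}/F$ by $F/(F+C)$ and raises to the power $\alpha/2$ at the end, whereas you raise $F/(F+C)$ to that power first. Your implicit assumption that $F>0$ on $I$ matches the paper's usage.
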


\begin{proof}
The hypothesis implies that $f^{2/\alpha}/F= (F^{-\alpha/2}f)^{2/\alpha}$ is nondecreasing on $I$.
Since $F'=f\ge0$, $F$ is also nondecreasing.
For $C\ge0$, the function $s\mapsto s/(s+C)$ is nondecreasing on $(0,\infty)$, so $F/(F+C)$ is nondecreasing on $I$. Hence
\[
\frac{f^{2/\alpha}}{F+C} = \frac{f^{2/\alpha}}{F}\frac{F}{F+C}
\]
is nondecreasing, being the product of two nonnegative nondecreasing functions. Therefore
\[
(F+C)^{-\alpha/2}f = \left(\frac{f^{2/\alpha}}{F+C}\right)^{\alpha/2}
\]
is nondecreasing on $I$.
\end{proof}

\section{A uniqueness criterion via gradient asymptotics}\label{sec3}

Throughout the uniqueness argument, we impose the following assumption: for some $\alpha\in(0,1]$ and $t_0\in\mathbb R$,
\begin{equation}	\label{eq3.1}
\int_{t_0}^\infty F(t)^{-\alpha/2}\,dt<\infty,
\qquad
t\mapsto F(t)^{-\alpha/2}f(t)\quad\text{is nondecreasing on }\;[t_0,\infty).
\end{equation}
As shown in Section~\ref{sec2},  this condition is preserved when $F$ is replaced by $F+C$, $C\ge0$.
For $\alpha\in(0,1)$, the integrability condition in \eqref{eq3.1} is stronger than the Keller--Osserman condition, since
\[
F(t)^{-1/2}\le F(t)^{-\alpha/2}
\]
for all sufficiently large $t$.
Moreover, the monotonicity of $F^{-\alpha/2}f$ is equivalent to the convexity of $F^{1-\alpha/2}$ on $[t_0,\infty)$.

The first result treats the case in which the interval $[t_0,\infty)$ in \eqref{eq3.1} contains the range of every boundary blow-up solution. Its short proof captures the essential comparison argument.

\begin{theorem}[A special case]		\label{thm3.2}
Let $\Omega\subset\mathbb R^n$ be a bounded domain, and let $t_0$ be a common lower bound for all boundary blow-up solutions, for instance that given by Lemma~\ref{lem2.3}.
Assume that, for some $\alpha\in(0,1)$, condition \eqref{eq3.1} holds with this choice of $t_0$.
Let $u$ be a boundary blow-up solution of
\[
\Delta u=f(u)\quad\text{in }\Omega,
\]
such that
\begin{equation}\label{eq3.3}
\limsup_{x\to\partial\Omega}Q_u(x)\le1,\qquad Q_u(x)=\frac{|\nabla u(x)|^2}{2F(u(x))}.
\end{equation}
Then $u$ is the unique boundary blow-up solution in $\Omega$.
\end{theorem}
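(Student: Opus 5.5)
The plan is to compare an arbitrary boundary blow-up solution $v$ with vertical shifts of $u$ in the transformed variable $w:=h_\alpha(u)$. These shifts are explicit sub- and supersolutions, and the hypotheses \eqref{eq3.1} and \eqref{eq3.3} are exactly what make them so.

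\emph{Normalization so that $\alpha Q_u\le 1$ everywhere.} By \eqref{eq3.3} and $\alpha<1$, there is a compact $K\subset\Omega$ with $Q_u<1/\alpha$ on $\Omega\setminus K$. On $K$, $\abs{\nabla u}$ is bounded and $u\ge t_0$. I therefore replace $F$ by $F+C$ with $C\ge0$ large; the value $C=\infty$ is not needed, only a finite $C$ making $\abs{\nabla u}^2/(2(F(u)+C))\le 1/\alpha$ on $K$. This replacement only decreases $Q_u$ on $\Omega\setminus K$. It keeps the integrability in \eqref{eq3.1}, and by Lemma~\ref{lem2.8} it keeps the monotonicity of $g:=F^{-\alpha/2}f$ on $[t_0,\infty)$; we may also take $F>0$ there. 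So from now on $1-\alpha Q_u\ge0$ in all of $\Omega$.

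\emph{Shifted functions.} Let $\Phi:=h_\alpha^{-1}$, which is decreasing from $(0,h_\alpha(t_0)]$ onto $[t_0,\infty)$, and set $w:=h_\alpha(u)>0$, so that $w\to0$ at $\partial\Omega$. For a constant $c$ put $\tilde u:=\Phi(w+c)$ wherever this is defined. Using $\Phi'(s)=-F(\Phi(s))^{\alpha/2}$, $\Phi''=\tfrac\alpha2F^{\alpha-1}f$ (evaluated at $\Phi(s)$), $\Delta w=-g(u)(1-\alpha Q_u)$ and $\abs{\nabla w}^2=2F(u)^{1-\alpha}Q_u$, a direct computation with $r:=F(\tilde u)/F(u)$ should give
\[
\Delta\tilde u-f(\tilde u)=F(\tilde u)^{\alpha/2}\Bigl[\bigl(g(u)-g(\tilde u)\bigr)(1-\alpha Q_u)+\alpha Q_u\, g(\tilde u)\bigl(r^{\alpha-1}-1\bigr)\Bigr].
\]
\begin{itemize}
\item If $c>0$, then $\tilde u<u$, so $r\le1$, $r^{\alpha-1}\ge1$ and $g(u)\ge g(\tilde u)$. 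Together with $1-\alpha Q_u\ge0$ every term is nonnegative, so $\tilde u$ is a subsolution on the set where it is defined.
\item If $c<0$, then $\tilde u>u$ and all signs reverse, so $\tilde u$ is a supersolution on $\{w>-c\}$.
\end{itemize}

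\emph{Comparison with an arbitrary solution $v$.} Take $c=\varepsilon>0$. Then $\Phi(w+\varepsilon)$ is defined on all of $\Omega$ (for small $\varepsilon$, after extending $h_\alpha$ below $t_0$ if needed, or by restricting to where $w+\varepsilon\le h_\alpha(t_0)$). It stays bounded near $\partial\Omega$, where it tends to $\Phi(\varepsilon)<\infty$, while $v\to\infty$. Lemma~\ref{lem2.1} then gives $v\ge\Phi(w+\varepsilon)$, and letting $\varepsilon\to0$ yields $v\ge u$.

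Take $c=-\varepsilon$ and work on the domain $\{w>\varepsilon\}\Subset\Omega$, or on each of its components. On this set $\Phi(w-\varepsilon)$ is a supersolution that tends to $+\infty$ at its boundary, while $v$ is bounded there. Lemma~\ref{lem2.1} gives $v\le\Phi(w-\varepsilon)$ on this set, and letting $\varepsilon\to0$ yields $v\le u$ in $\Omega$. Hence $v=u$.

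I expect the main obstacle to be the first step: the sign $1-\alpha Q_u\ge0$ is needed globally, whereas \eqref{eq3.3} controls it only near $\partial\Omega$. The $F+C$ normalization via Lemma~\ref{lem2.8} is meant to resolve this, but it must be checked against the requirement that all relevant values stay in $[t_0,\infty)$. The second delicate point is the domain-of-definition bookkeeping for $\Phi(w+\varepsilon)$ near the lower end $t_0$, together with verifying the sign computation above.
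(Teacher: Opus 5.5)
Your proof is correct, and it is organized differently from the paper's. The paper compares the two transformed solutions directly. On $D_\epsilon=\{h(u)-h(\tilde u)>\epsilon\}$ it splits $\Delta\{h(u)-h(\tilde u)\}$ into a drift term $\vec b\cdot\nabla w$, where $\vec b$ depends on both solutions, plus two nonnegative terms. It then applies the maximum principle for $\Delta-\vec b\cdot\nabla$, once in each direction.

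You use the same identity for $\Delta h_\alpha(u)$ and the same normalization of $F$ via Lemma~\ref{lem2.8}, but you recast them as the statement that the shifts $\Phi(h_\alpha(u)\pm\varepsilon)$ are explicit sub- and supersolutions built from $u$ alone. Your formula for $\Delta\tilde u-f(\tilde u)$ and its sign analysis check out; the factor $r^{\alpha-1}-1$ plays the role of the paper's $F(\tilde u)^{1-\alpha}-F(u)^{1-\alpha}$ term. After that, only the elementary comparison principle, Lemma~\ref{lem2.1}, is needed. No first-order term appears, the competitor's gradient never enters, and you get the squeeze $\Phi(h_\alpha(u)+\varepsilon)\le v\le\Phi(h_\alpha(u)-\varepsilon)$ directly.

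The price is bookkeeping at the lower end $t_0$. The paper never faces this, because in $D_\epsilon$ both functions are genuine solutions with values in $[t_0,\infty)$.

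On that bookkeeping, your first option, extending $h_\alpha$ below $t_0$, is not justified. $F^{-\alpha/2}f$ is assumed nondecreasing only on $[t_0,\infty)$, and the barrier would drop below $t_0$ wherever $h_\alpha(u)+\varepsilon>h_\alpha(t_0)$. This does happen when $\min_\Omega u=t_0$, for instance if $u$ is minimal and $t_0:=\min_\Omega u$.

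Your second option, restricting the domain, works, but it needs one extra observation. Set $U_\varepsilon:=\{h_\alpha(u)<h_\alpha(t_0)-\varepsilon\}$.
\begin{enumerate}
\item On $U_\varepsilon$ the barrier takes values in $(t_0,\infty)$.
\item At $\partial\Omega$ it tends to $\Phi(\varepsilon)<\infty$, while $v\to\infty$.
\item On $\partial U_\varepsilon\cap\Omega$ it equals $\Phi(h_\alpha(t_0))=t_0\le v$, precisely because $t_0$ is a common lower bound.
\end{enumerate}
Lemma~\ref{lem2.1} then gives $v\ge\Phi(h_\alpha(u)+\varepsilon)$ on $U_\varepsilon$. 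Letting $\varepsilon\to0$ yields $v\ge u$ on $\{u>t_0\}$, and $v\ge t_0=u$ holds trivially on $\{u=t_0\}$.
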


\begin{proof}
Since $\alpha\in(0,1)$, we have $1<1/\alpha$.
Thus \eqref{eq3.3} implies that $Q_u<1/\alpha$ in a sufficiently small neighborhood of $\partial\Omega$.
Hence there exists $\Omega_0\Subset\Omega$ such that
\[
Q_u<1/\alpha\qquad\text{in }\;\Omega\setminus\overline\Omega_0.
\]
Moreover, $Q_u$ is bounded on $\overline\Omega_0$.
By Lemma~\ref{lem2.8}, after increasing the additive constant in $F$, if necessary, and continuing to denote the resulting primitive by $F$, we may assume that $Q_u\le1$ on $\overline\Omega_0$, while the monotonicity in \eqref{eq3.1} is preserved.
Therefore
\begin{equation}\label{eq1953wed}
Q_u(x)<1/\alpha\qquad\text{throughout }\;\Omega.
\end{equation}

Set
\[
h(t):=\int_t^\infty F(s)^{-\alpha/2}\,ds,
\qquad
a(t):=F(t)^{-\alpha/2}f(t).
\]
Then $h$ is strictly decreasing and
\begin{equation}\label{eq3.5}
\Delta h(u)=a(u)(\alpha Q_u-1).
\end{equation}

Let $\tilde u$ be any other boundary blow-up solution.
Suppose that $u <\tilde u$ at some point in $\Omega$.
Then, for some $\epsilon>0$,
\[
D_\epsilon:=\{x\in\Omega:h(u)-h(\tilde u)>\epsilon\} \neq \emptyset.
\]
Since $h(u),\, h(\tilde u) \to 0$ as $x \to \partial\Omega$, we have $D_\epsilon \Subset\Omega$. Set
\[
w:=h(u)-h(\tilde u)-\epsilon.
\]
Then $w=0$ on $\partial D_\epsilon$ and $\tilde u>u$ in $D_\epsilon$.
By \eqref{eq3.5},
\begin{align*}
\Delta w
&=\alpha a(\tilde u)(Q_u-Q_{\tilde u})
 +(a(\tilde u)-a(u))(1-\alpha Q_u)\\
&=:I_1+I_2.
\end{align*}
The monotonicity of $a$ in $[t_0,\infty)$ and \eqref{eq1953wed} imply that $I_2 \ge 0$.
Moreover,
\begin{align*}
I_1 &= \frac{\alpha}{2}F(\tilde u)^{\alpha/2-1} f(\tilde u)
 \left(|\nabla h(u)|^2-|\nabla h(\tilde u)|^2\right)\\
&\qquad +\frac{\alpha}{2}F(\tilde u)^{\alpha/2-1}f(\tilde u)\, \frac{|\nabla u|^2}{F(u)}
\left(F(\tilde u)^{1-\alpha}-F(u)^{1-\alpha}\right).
\end{align*}
The second term is nonnegative since $F$ is nondecreasing and $1-\alpha>0$.
The first term is $\vec b\cdot\nabla w$, where
\[
\vec b:=\frac{\alpha}{2}F(\tilde u)^{\alpha/2-1}f(\tilde u) \nabla\{h(u)+h(\tilde u)\}.
\]
Hence
\[
\Delta w-\vec b\cdot\nabla w\ge0\qquad\text{in }D_\epsilon.
\]
Since $w=0$ on $\partial D_\epsilon$, the maximum principle gives $w\le0$ in $D_\epsilon$, contradicting its definition.
Therefore
\begin{equation}\label{eq3.6}
u\ge\tilde u\qquad\text{in }\;\Omega.
\end{equation}

Next, suppose that $\tilde u<u$ somewhere in $\Omega$.
Then, for some $\epsilon>0$,
\[
\emptyset \neq \tilde D_\epsilon:=\{x\in\Omega:h(\tilde u)-h(u)>\epsilon\} \Subset \Omega.
\]
Set
\[
\tilde w:=h(\tilde u)-h(u)-\epsilon.
\]
Then $\tilde u<u$ in $\tilde D_\epsilon$, and
\[
\Delta \tilde w =\alpha a(\tilde u)(Q_{\tilde u}-Q_u)  +(a(u)-a(\tilde u))(1-\alpha Q_u).
\]
As before, the second term is nonnegative in $\tilde D_\epsilon$.
We write the first term as
\begin{align*}
I_3 + I_4 &:=
\frac{\alpha}{2}\,F(\tilde u)^{\alpha/2-1}f(\tilde u)\left(|\nabla h(\tilde u)|^2-|\nabla h(u)|^2\right)\\
&\qquad +\frac{\alpha}{2}\,F(\tilde u)^{\alpha/2-1}f(\tilde u)\,\frac{\abs{\nabla u}^2}{F(u)}
\left(F(u)^{1-\alpha}-F(\tilde u)^{1-\alpha}\right).
\end{align*}
Again, $I_4\ge0$, while $I_3=\vec b\cdot\nabla\tilde w$ with the same $\vec b$ as above.
Hence
\[
\Delta \tilde w-\vec b\cdot\nabla \tilde w\ge0\qquad\text{in }\;\tilde D_\epsilon.
\]
The maximum principle again yields a contradiction.
Thus $\tilde u\ge u$ in $\Omega$.
Together with \eqref{eq3.6}, this gives $u=\tilde u$.
\end{proof}

We next extend it to the case where the monotonicity assumption holds only for sufficiently large values.
The proof is more technical but uses only standard ingredients and requires no regularity of $\partial\Omega$.
We first recall a smooth interior exhaustion of $\Omega$ whose boundaries approach $\partial\Omega$.

\begin{lemma} \label{lem3.7}
Let $\Omega\subset\mathbb R^n$ be a bounded domain, and let
$d(x)=\dist(x,\partial\Omega)$.
There exists an exhaustion of $\Omega$ by smooth domains $\Omega_j$ such that
\[
\Omega_j\Subset\Omega_{j+1}\Subset\Omega,
\qquad \bigcup_{j=1}^\infty\Omega_j=\Omega,
\]
and
\begin{equation}\label{eq3.8}
\sup_{x\in\partial\Omega_j}d(x) \to 0\qquad\text{as }\;j\to \infty.
\end{equation}
\end{lemma}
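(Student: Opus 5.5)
The plan is to build the exhaustion from sublevel sets of a smooth approximation of $d$. The function $d$ is $1$-Lipschitz, positive in $\Omega$, and vanishes on $\partial\Omega$. Choose a function $\rho\in C^\infty(\Omega)$ with
\[
\tfrac12 d(x)\le \rho(x)\le 2d(x)\qquad\text{in }\Omega .
\]
One way is a Whitney-type partition of unity subordinate to Whitney cubes, setting $\rho=\sum_Q d(x_Q)\varphi_Q$. Alternatively, mollify $d$ with a variable radius proportional to $d(x)$; since $d$ is $1$-Lipschitz, a mollification at scale $\delta\le d/4$ changes $d$ by at most $d/4$. Then $\rho$ extends continuously by $0$ to $\partial\Omega$, and $\{\rho>t\}\Subset\Omega$ for every $t>0$.

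Next, choose levels $t_j\downarrow0$ that are regular values of $\rho$, which is possible by Sard's theorem, with $t_{j+1}<t_j/4$. The sets $\{\rho>t_j\}$ are open and have smooth boundary $\{\rho=t_j\}$, since $\nabla\rho\neq0$ there. However, they may be disconnected, and a domain must be connected.

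To fix connectedness, fix $x_0\in\Omega$ and, for large $j$, let $\Omega_j$ be the connected component of $\{\rho>t_j\}$ containing $x_0$. Each component is open with smooth boundary contained in $\{\rho=t_j\}$. This gives the boundary estimate
\[
\sup_{\partial\Omega_j}d\le 2t_j\to0,
\]
which is \eqref{eq3.8}. Compact containment also holds: $\overline\Omega_j\subset\{\rho\ge t_j\}\subset\{d\ge t_j/2\}$, which sits inside $\{\rho>t_{j+1}\}$ because $d\ge t_j/2$ gives $\rho\ge t_j/4>t_{j+1}$. Being connected and containing $x_0$, $\overline\Omega_j$ therefore lies in the component $\Omega_{j+1}$.

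The remaining step, and the main obstacle, is the union property $\bigcup_j\Omega_j=\Omega$. Given $y\in\Omega$, join $x_0$ to $y$ by a path $\gamma$ in $\Omega$, which exists because $\Omega$ is connected. The path is compact, so $d\ge c>0$ on $\gamma$, hence $\rho\ge c/2$ on $\gamma$. Once $t_j<c/2$, the path lies in $\{\rho>t_j\}$ and thus in the component containing $x_0$, so $y\in\Omega_j$.

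Two technical points need care. The first is constructing $\rho$ smooth with the two-sided comparison; the Whitney partition of unity is the cleanest way, since the bound $\tfrac12 d\le\rho\le2d$ follows from $d(x_Q)\approx d(x)$ on the support of $\varphi_Q$. The second is that the level sets are smooth only at regular values, which Sard's theorem handles.
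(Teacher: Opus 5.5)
Your proposal is correct and is essentially the paper's proof: a smooth $\rho$ comparable to $d$, regular values $\epsilon_j\downarrow0$ from Sard's theorem, the component of $\{\rho>\epsilon_j\}$ containing a fixed $x_0$, a compact-path argument for the union, and $d\le C\rho=C\epsilon_j$ on $\partial\Omega_j$. The paper takes $\rho$ from Lieberman's regularized distance, which also sidesteps a problem with your mollification alternative: a radius proportional to the merely Lipschitz $d$ would not yield a $C^\infty$ function. It also gets $\overline\Omega_j\subset\Omega_{j+1}$ directly from $\rho\ge\epsilon_j>\epsilon_{j+1}$ on $\overline\Omega_j$, so your spacing $t_{j+1}<t_j/4$ is unnecessary.
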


\begin{proof}
By the regularized distance construction of Lieberman \cite{Lieberman1985}, there exist a positive function $\rho\in C^\infty(\Omega)$ and constants $c_0,C_0>0$ such that
\[
c_0d(x)\le \rho(x)\le C_0d(x)\qquad\text{in }\Omega.
\]
Fix $x_0 \in\Omega$. By Sard's theorem, choose regular values
\[
0<\epsilon_j<\rho(x_0),\qquad \epsilon_j\downarrow0,
\]
of $\rho$, and let $\Omega_j$ be the connected component of $\{\rho>\epsilon_j\}$ containing $x_0$.
Since $d(x)>\epsilon_j/C_0$ in $\Omega_j$, we have $\Omega_j\Subset\Omega$.
Moreover,
$\rho\ge\epsilon_j>\epsilon_{j+1}$ on $\overline \Omega_j$, so $\overline \Omega_j \subset \Omega_{j+1}$.
Since $\epsilon_j$ is a regular value,
$\partial\Omega_j\subset \{\rho=\epsilon_j\}$ is smooth.
The domains $\Omega_j$ exhaust $\Omega$. Indeed, for any $x\in\Omega$, choose a path in $\Omega$ joining $x_0$ to $x$.
Its image is compact, so $\rho$ has a positive minimum on it.
For all sufficiently large $j$, the path lies in $\{\rho>\epsilon_j\}$, and hence $x\in\Omega_j$.
Finally, $d\asymp \rho=\epsilon_j$ on $\partial\Omega_j$, and therefore \eqref{eq3.8} holds.
\end{proof}

\begin{theorem}	\label{thm3.9}
Let $\Omega\subset\mathbb R^n$ be a bounded domain.
Assume that, for some $\alpha\in(0,1)$ and $t_0 \in \mathbb R$, condition \eqref{eq3.1} holds.
Let $u$ be a boundary
blow-up solution of
\[
\Delta u=f(u)\quad\text{in }\Omega,
\]
such that 
\begin{equation}\label{eq3.10}
\limsup_{x\to\partial\Omega}Q_u(x)\le1,\qquad Q_u(x)=\frac{|\nabla u(x)|^2}{2F(u(x))}.
\end{equation}
Then $u$ is the unique boundary blow-up solution in $\Omega$.
\end{theorem}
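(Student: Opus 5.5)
The plan is to reduce to the comparison argument of Theorem~\ref{thm3.2} by \emph{shifting} one of the two solutions by a constant. This makes all values entering the monotonicity step lie in $[t_0,\infty)$. Afterwards I will remove the shift by a sweeping argument. Fix a second boundary blow-up solution $\tilde u$. Both $u$ and $\tilde u$ are continuous and tend to $\infty$ at $\partial\Omega$, so they are bounded below on $\Omega$ and exceed $t_0$ outside some $\Omega_j$ from Lemma~\ref{lem3.7}. As in the proof of Theorem~\ref{thm3.2}, Lemma~\ref{lem2.8} lets me enlarge the additive constant in $F$ so that $Q_u<1/\alpha$ on all of $\Omega$, while \eqref{eq3.1} is preserved.

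\emph{Step 1 (large shifts).} For $c\ge0$, the function $u_c:=u+c$ is a supersolution, since $\Delta u_c=f(u)\le f(u_c)$. It also satisfies $Q_{u_c}=|\nabla u|^2/(2F(u+c))\le Q_u<1/\alpha$. Because $h'<0$, I get
\[
\Delta h(u_c)\ \ge\ a(u_c)(\alpha Q_{u_c}-1).
\]
Hence the computation of $\Delta w$ in Theorem~\ref{thm3.2} goes through with $u$ replaced by $u_c$, up to an extra term of favourable sign. Choose $c_0$ with $u+c_0\ge t_0$ and $\tilde u+c_0\ge t_0$ in $\Omega$. On the sets where $\tilde u>u_c$, $c\ge c_0$, both arguments of $a$ then lie in $[t_0,\infty)$, so the terms $I_2$ and $I_1$ have the required signs. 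The maximum principle for $\Delta-\vec b\cdot\nabla$ on $D_\epsilon=\{h(u_c)-h(\tilde u)>\epsilon\}\Subset\Omega$ gives $\tilde u\le u+c$ for all $c\ge c_0$.

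The symmetric statement is $u\le\tilde u+c$. I will get it the same way, with the roles arranged as in the second half of the proof of Theorem~\ref{thm3.2}. That half uses only $Q_u$ and the ordering $u>\tilde u+c\ge t_0$ on $\tilde D_\epsilon$; the supersolution property of $\tilde u+c$ again contributes a term of favourable sign. So $|u-\tilde u|\le c_0$ in $\Omega$.

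\emph{Step 2 (sweeping down).} Let $c^*:=\inf\{c\ge0:\tilde u\le u+c\}\le c_0$ and suppose $c^*>0$. Then $\sup_\Omega(\tilde u-u)=c^*$. There are two cases.
\begin{itemize}
\item The supremum is attained at an interior point. Since $f$ is locally Lipschitz, the function $u+c^*-\tilde u\ge0$ satisfies a linear inequality $\Delta(u+c^*-\tilde u)\le f(u+c^*)-f(\tilde u)$ whose right-hand side is bounded by a bounded coefficient times $(u+c^*-\tilde u)$. The strong maximum principle then forces $\tilde u\equiv u+c^*$. Then $f(u)=f(u+c^*)$ on the whole range $[\inf u,\infty)$ of $u$, so $f$ would be $c^*$-periodic on that range. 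Since $f$ is also nondecreasing, it would be constant there. This is incompatible with the Keller--Osserman condition \eqref{eq1.1}.
\item The supremum is approached only along sequences tending to $\partial\Omega$. Here I work in the layer $\Omega\setminus\overline\Omega_j$, where $u,\tilde u\ge t_0$ for $j$ large. There the structural argument of Step~1 is available for every shift $c\in(0,c^*)$. I want to show that $w_c=h(u+c)-h(\tilde u)$ cannot have a positive maximum in the layer. Its positive part must then be controlled by its values on $\partial\Omega_j$. Letting $c\uparrow c^*$ and using $\sup_{\partial\Omega_j}(\tilde u-u)<c^*$, this should contradict the fact that the supremum is approached only at $\partial\Omega$.
\end{itemize}

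The main obstacle is this last case. One has to convert the information ``$w_c\le\max_{\partial\Omega_j}w_c^+$ in the layer'' back into a statement about $\tilde u-u$. The difficulty is that $h$ flattens at infinity, so small differences of $h$ near $\partial\Omega$ correspond to possibly large differences of $u$. My first attempt will be to exploit the strict positivity of the extra term: on $\{\tilde u>u+c\}$ one has $-h'(u+c)\,(f(u+c)-f(u))\ge0$, and in addition $I_4>0$ whenever $\tilde u>u+c$. If this fails, I will instead replace the constant shift by a multiplicative perturbation $h(\tilde u)\mapsto(1-\delta)h(\tilde u)$, in the spirit of Marcus--V\'eron. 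That perturbation is adapted to the vanishing of $h$ at $\partial\Omega$ and avoids the flattening problem.

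The second inequality $u\le\tilde u$ is then handled symmetrically, and together the two give $u=\tilde u$.
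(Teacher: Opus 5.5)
Your Step 1 and the interior-touching case of Step 2 are correct. The function $u+c$ is a supersolution with $Q_{u+c}\le Q_u$. The decomposition from Theorem~\ref{thm3.2} goes through with the extra term $F(u+c)^{-\alpha/2}\bigl(f(u+c)-f(u)\bigr)\ge0$. As you note, the second half of that decomposition uses only $Q_u$.

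\textbf{The gap.} It lies in the boundary case of Step 2, which you leave open and whose difficulty you misdiagnose. You never need to turn a \emph{size} bound on $h(u+c)-h(\tilde u)$ into one on $\tilde u-u$. You only need a \emph{sign}, and signs transfer exactly because $h$ is strictly decreasing, so the flattening of $h$ is irrelevant. Here is the missing step.
\begin{enumerate}
\item Fix $j$ with $u,\tilde u\ge t_0$ and $Q_u<1/\alpha$ on $\Omega\setminus\overline\Omega_j$.
\item In this case $M_j:=\max_{\overline\Omega_j}(\tilde u-u)<c^*$ by compactness. You need the maximum over $\overline\Omega_j$, not just over $\partial\Omega_j$. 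Pick $c\in(\max\{M_j,0\},c^*)$.
\item Then $w_c=h(u+c)-h(\tilde u)<0$ on $\partial\Omega_j$, and $w_c\to0$ at $\partial\Omega$. So for each $\epsilon>0$ the set $D=\{x\in\Omega\setminus\overline\Omega_j:\ w_c>\epsilon\}$ has closure inside $\Omega\setminus\overline\Omega_j$, and $w_c=\epsilon$ on $\partial D$.
\item On $D$ you have $\tilde u>u+c\ge t_0$, hence $\Delta w_c-\vec b\cdot\nabla w_c\ge0$. The maximum principle gives $D=\emptyset$.
\item Thus $\tilde u\le u+c$ on the layer, and on $\overline\Omega_j$ by the choice of $c$. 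This contradicts the definition of $c^*$.
\end{enumerate}
The reverse inequality is proved the same way, comparing $\tilde u+c$ with $u$. Neither of your fallbacks (positivity of $I_4$, a multiplicative perturbation) is needed.

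\textbf{Comparison with the paper.} With this step supplied, you have a complete proof by a different route from the paper's. The paper does not shift. It solves auxiliary Dirichlet problems $u_j^\pm$ in $\Omega_j\setminus K$, with data $u$ on $\partial K$ and constants $\tau_j^\pm$ on $\partial\Omega_j$. It uses an ABP barrier to keep $u_j^-\ge t_0$, and applies the $h$-comparison only between $u$ and $u_j^\pm$ to get $u_j^\pm\to u$. It then sandwiches $\tilde u$ by the linear maximum principle to obtain \eqref{eq3.17}.

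Your shift replaces those auxiliary problems. It makes the inner boundary data strictly ordered, while the $h$-comparison controls the behaviour at $\partial\Omega$. In fact, letting $c\downarrow\max\{0,\max_{\partial\Omega_j}(\tilde u-u)\}$ in the layer argument yields \eqref{eq3.17} directly, so the global shift of Step 1 is not needed. Your route is shorter and avoids the ABP estimate. The paper's route keeps the nonlinear comparison between exact solutions that share boundary data on $\partial K$.
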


\begin{proof}
Let $\tilde u$ be any other boundary blow-up solution. We show that $u=\tilde u$.

Fix $t_1>t_0$.
Since $u,\,\tilde u \to \infty$ as $x \to \partial \Omega$ and $1<1/\alpha$, \eqref{eq3.10} gives a boundary neighborhood in which
\begin{equation}\label{eq3.11}
u,\tilde u\ge t_1, \qquad Q_u<1/\alpha
\end{equation}
Let $\{\Omega_j\}$ be the exhaustion in
Lemma~\ref{lem3.7}. 
Choosing $j_0$ sufficiently large and setting
\[
K:=\overline \Omega_{j_0},
\]
we may assume that \eqref{eq3.11} holds throughout
$\Omega\setminus  K$.
Enlarging $j_0$ if necessary, we may also arrange that
\begin{equation}\label{eq3.12}
C_n \diam(\Omega) f(t_1)\, \abs{\Omega\setminus K}^{1/n}\le t_1-t_0,
\end{equation}
where $C_n$ is the constant in the Alexandrov--Bakelman--Pucci estimate.

For $j>j_0$, set
\[
\tau_j^-:=\inf_{\partial\Omega_j}  \min (u,\tilde u),
\qquad
\tau_j^+:=\sup_{\partial\Omega_j} \max (u,\tilde u).
\]
By \eqref{eq3.8} and the boundary blow-up condition, $\tau_j^-\to\infty$.
After discarding finitely many indices, we may assume that
\[
\tau_j^-\ge t_1, \qquad \tau_j^+\ge\max_{\partial K} u.
\]
Let $V_j$ solve
\[
-\Delta V_j=1\quad\text{in}\; \Omega_j \setminus K,
\qquad V_j=0\quad\text{on }\partial (\Omega_j \setminus K).
\]
The maximum principle and the Alexandrov--Bakelman--Pucci estimate give
\[
0 \le V_j \le C_n\diam(\Omega) \,\abs{\Omega\setminus K}^{1/n}.
\]
Hence, by \eqref{eq3.12}, $v_j:=t_1-f(t_1)V_j$ satisfies
\[
t_0\le  v_j\le t_1\quad\text{in }\;\Omega_j \setminus K,
\]
with  $v_j=t_1$ on $\partial (\Omega_j \setminus K)$.
Moreover,
\[
\Delta  v_j=f(t_1)\ge f(v_j).
\]

The method of subsolutions and supersolutions yields solutions $u_j^\pm$ of
\[
\left\{
\begin{aligned}
\Delta u_j^\pm&=f(u_j^\pm)&&\text{in }\;\Omega_j \setminus K,\\
u_j^\pm&=u&&\text{on }\;\partial K,\\
u_j^\pm&=\tau_j^\pm&&\text{on }\; \partial \Omega_j.
\end{aligned}
\right.
\]
Comparison with $v_j$ and $u$ gives
\begin{equation}\label{eq3.15}
t_0\le u_j^-\le u\le u_j^+
\qquad\text{in }\;\Omega_j \setminus K.
\end{equation}
Set
\[
h(t):=\int_t^\infty F(s)^{-\alpha/2}\,ds,
\qquad
a(t):=F(t)^{-\alpha/2}f(t).
\]
All functions in \eqref{eq3.15} take values in $[t_0,\infty)$, where $a$ is nondecreasing, and  $Q_u<1/\alpha$ in $\Omega_j \setminus K$.
The decompositions used in the proof of Theorem~\ref{thm3.2} therefore give continuous vector fields $\vec b_j^\pm$ such that
\begin{align*}
\Delta \left\{ h(u_j^-)-h(u) \right\}  -\vec b_j^-\cdot\nabla \left\{ h(u_j^-)-h(u)\right) &\ge 0,\\
\Delta \left\{ h(u)-h(u_j^+) \right\} -\vec b_j^+\cdot\nabla \left\{ h(u)-h(u_j^+)\right\} &\ge0
\end{align*}
in $\Omega_j \setminus K$.
Both differences are nonnegative and vanish on $\partial K$.
On $\partial\Omega_j$, they are bounded above by $h(\tau_j^-)$.
The maximum principle therefore gives
\[
0\le h(u_j^-)-h(u)\le h(\tau_j^-), \qquad
0\le h(u)-h(u_j^+)\le h(\tau_j^-) \quad\text{in }\;\Omega_j \setminus K.
\]
Since $h(\tau_j^-)\to0$ and $h$ is strictly decreasing, it follows that
\begin{equation}\label{eq3.16}
u_j^-, \,u_j^+\to u \qquad\text{locally uniformly in }\;\Omega\setminus  K.
\end{equation}

On the other hand, set $w_j:=\tilde u-u_j^-$. Then
\[
\Delta w_j = f(\tilde u)-f(u_j^-)=q_j(x) w_j \quad\text{in }\;\Omega_j \setminus K,
\]
where $q_j \ge0$ and $q_j \in L^\infty(\Omega_j\setminus K)$ by the monotonicity and local Lipschitz continuity of $f$.
Moreover,
\[
w_j= \tilde u - u \quad\text{on }\;\partial K,
\qquad
w_j\ge0\quad\text{on }\;\partial \Omega_j.
\]
Hence the maximum principle gives
\[
\tilde u-u_j^- \ge\min\left\{0,\,\min_{\partial K}\, (\tilde u-u) \right\} \quad\text{in }\;\Omega_j \setminus K.
\]
Similarly, applying the maximum principle to $\tilde u-u_j^+$ yields
\[
\tilde u-u_j^+ \le\max\left\{0,\, \max_{\partial K}\, (\tilde u-u) \right\}
\quad\text{in }\;\Omega_j \setminus K.
\]
Letting $j\to\infty$ and using \eqref{eq3.16}, we obtain
\begin{equation}\label{eq3.17}
\min\left\{0,\,\min_{\partial K}\,(\tilde u-u)\right\} \le \tilde u-u \le \max\left\{0,\,\max_{\partial K}\,(\tilde u-u)\right\}
\quad\text{in }\;\Omega\setminus  K.
\end{equation}

Recall that $K=\overline \Omega_{j_0}$. In  $\Omega_{j_0}$, the difference $w:=\tilde u -u$ satisfies
\[
\Delta w=q(x)w,
\qquad q(x)\ge0.
\]
Applying the maximum principle to $w$ and $-w$ yields the same two-sided estimate as in \eqref{eq3.17}. Hence \eqref{eq3.17} holds throughout $\Omega$.
Suppose $w\not\equiv0$.
Then $w$ attains either a positive global maximum or a negative global minimum on $\partial K\subset\Omega$.
The strong maximum principle then implies that $w$ is constant.
Thus $\tilde u=u+c$ for some $c\ne0$. Since $u$ and $\tilde u$ satisfy the same equation,
\[
f(u+c)=f(u)\quad\text{in }\;\Omega.
\]
The range of $u$ contains all sufficiently large values, so $f(t+c)=f(t)$ for all sufficiently large $t$.
Since $f$ is nondecreasing, this forces $f$ to be eventually constant, contradicting the Keller--Osserman condition. Therefore $c=0$, and hence $\tilde u=u$.
\end{proof}

\begin{remark}
The boundary condition in Theorems~\ref{thm3.2} and ~\ref{thm3.9} can be weakened to
\[
\limsup_{x\to\partial\Omega} Q_u(x)<\frac1\alpha.
\]
Indeed, the proofs only require $Q_u<1/\alpha$ in a neighborhood of $\partial\Omega$.
\end{remark}

\section{Boundary gradient asymptotics and uniqueness in $C^{1,1}$ domains}	\label{sec4}

We begin with the one-dimensional model problem
\begin{equation}	\label{eq4.1}
\left\{
\begin{aligned}
u''(r)&=f(u(r)), && r>0,\\
u(r)&\to\infty, && r\downarrow0.
\end{aligned}
\right.
\end{equation}
Let $F$ be any primitive of $f$.
Integrating \eqref{eq4.1} once gives
\[
u'(r)^2=2F(u(r))+c,
\]
for some constant $c$.
Since $u$ is convex and $u(r)\to\infty$ as $r\downarrow0$, we have $u'<0$ on some interval $(0,r_0)$. Thus
\[
-\frac{dr}{du}=\frac{1}{\sqrt{2F(u)+c}}, \qquad
r=\int_u^\infty \frac{ds}{\sqrt{2F(s)+c}}=:\psi_c(u).
\]
The Keller--Osserman condition \eqref{eq1.1} ensures that $\psi_c$ is finite. Define
\begin{equation}	\label{eq4.2}
\psi(t):=\psi_0(t)=\int_t^\infty \frac{ds}{\sqrt{2F(s)}}.
\end{equation}
Since $F(t)\to\infty$ as $t\to\infty$, comparison of the tail integrals gives
\begin{equation}	\label{eq4.3}
\lim_{t\to\infty} \,\frac{\psi(t)}{\psi_c(t)}=1.
\end{equation}
Since $r=\psi_c(u(r))$, \eqref{eq4.3} gives
\[
\frac{\psi(u(r))}{r}\to1 \qquad\text{as }\;r\downarrow0.
\]

We next consider the asymptotic behavior of solutions of
\begin{equation}	\label{eq4.4}
\left\{
\begin{aligned}
u''(r)+A(r)u'(r)&=f(u(r)), && r>0,\\
u(r)&\to\infty, && r\downarrow0.
\end{aligned}
\right.
\end{equation}
Assume that, for some $r_0,\,M>0$,
\begin{equation}	\label{eq4.6}
\abs{A(r)}\le M\quad\text{for }\; 0<r\le r_0.
\end{equation}

The following lemma is well-known; see \cite{BE, Ma}.
We provide a self-contained proof for completeness.
\begin{lemma}	\label{lem4.6}
Let $u$ be a solution of \eqref{eq4.4}, and assume that $A$ satisfies \eqref{eq4.6}. Then
\begin{equation}	\label{eq4.7}
\frac{\psi(u(r))}{r}\to1 \qquad\text{as }\;r\downarrow0,
\end{equation}
where $\psi$ is defined by \eqref{eq4.2}.
\end{lemma}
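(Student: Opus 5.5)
The plan is to show that $u'(r)^2/(2F(u(r)))\to1$ as $r\downarrow0$ and then recover \eqref{eq4.7} by integrating. Fix $r_1\le r_0$ small. First I will show $u'<0$ near $0$ and $u'\to-\infty$. Since $u\to\infty$, we have $f(u)\to\infty$ (otherwise $f$ is bounded and $F$ grows at most linearly, contradicting \eqref{eq1.1}). Write the equation as $(e^{\int^r A}u')'=e^{\int^r A}f(u)$, where the integrating factor $E(r):=e^{\int_{r_1}^r A}$ lies in $[e^{-Mr_1},e^{Mr_1}]$. Then $Eu'$ is increasing. Because $u\to\infty$ as $r\downarrow 0$, $u'$ cannot be bounded below near $0$. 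Monotonicity of $Eu'$ then forces $u'<0$ on some $(0,r_2)$ and $u'\to-\infty$, so $u$ is decreasing there.

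Next, I will set $G(r):=u'(r)^2$ and compute
\[
G'=2u'u''=2u'f(u)-2A(u')^2=\frac{d}{dr}2F(u)-2AG .
\]
Integrating from $r$ to $r_2$ gives
\[
G(r)=2F(u(r))+C+2\int_r^{r_2}A(s)G(s)\,ds .
\]
Gronwall-type control with $|A|\le M$ then yields two-sided bounds. Putting $\Phi(r)=\int_r^{r_2}G$, we get $-\Phi'\le 2F(u)+C+2M\Phi$, which gives $G(r)\le e^{2Mr_2}\bigl(2F(u(r))+C'\bigr)$ up to harmless constants, and similarly $G(r)\ge e^{-2Mr_2}\bigl(2F(u(r))-C'\bigr)$. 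Here I will use that $F(u(s))$ is monotone in $s$: it is decreasing in $s$ because $u$ is decreasing and $F$ is nondecreasing, so $\int_r^{r_2}e^{2M(s-r)}\,d(2F(u))$ can be bounded by $2F(u(r))$ times $e^{\pm 2Mr_2}$. Since $F(u(r))\to\infty$ as $r\downarrow0$, and $r_2$ can be taken arbitrarily small (the constants depend on $r_2$ but become negligible relative to $F(u)$), I obtain
\[
\lim_{r\downarrow0}\frac{u'(r)^2}{2F(u(r))}=1 .
\]

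Finally, since $\frac{d}{dr}\psi(u(r))=-u'/\sqrt{2F(u)}=|u'|/\sqrt{2F(u)}\to1$ and $\psi(u(r))\to0$ as $r\downarrow0$, L'Hôpital's rule (or the mean value theorem) gives $\psi(u(r))/r\to1$, which is \eqref{eq4.7}.

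The main obstacle is the Gronwall step. The error term $2\int AG$ must be shown to be $o(F(u(r)))$ rather than merely comparable to it. The remedy is to run the estimate on $(r,\delta)$ with $\delta\to0$: the multiplicative constants are then $e^{\pm2M\delta}\to1$, while the contributions coming from the endpoint $\delta$ are fixed finite numbers that are dwarfed by $F(u(r))\to\infty$.
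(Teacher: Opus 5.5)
Your proposal is correct and follows essentially the paper's route. The paper also makes $\frac{d}{dr}\bigl(B(u')^2\bigr)=B\,\frac{d}{dr}(2F(u))$ exact with the integrating factor $B(r)=\exp\bigl(-2\int_r^\delta A\bigr)\in[e^{-2M\delta},e^{2M\delta}]$, uses $\frac{d}{dr}F(u)\le0$ to get $e^{-4M\delta}(2F(u)+c_1)\le (u')^2\le e^{4M\delta}(2F(u)+c_2)$, and lets $\delta\downarrow0$. Your lower bound is cleanest read off from this exact representation, which your remark about $\int_r^{r_2}e^{2M(s-r)}\,d(2F(u))$ already uses; the reverse differential inequality for $\Phi$ alone bounds $\Phi$ from the wrong side. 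The only real difference is the last step: the paper inverts to bounds on $-dr/du$, integrates in $u$, and invokes $\psi/\psi_c\to1$ from \eqref{eq4.3}, whereas you extract the pointwise limit $(u')^2/(2F(u))\to1$ and apply L'H\^opital, which is equally valid and slightly more direct.
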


\begin{proof}
First, $u'<0$ on some interval $(0,r_1)$, with $r_1\le r_0$.
Indeed,
\[
\frac{d}{dr}\left(e^{\mu(r)}u'(r) \right)
=e^{\mu(r)}f(u(r))
\ge 0,\qquad \mu(r)=\int_{r_1}^r A(t)\,dt,
\]
so $u'$ can change sign at most once.
The condition $u(r)\to\infty$ as $r\downarrow0$ then implies that $u'<0$ near $0$.
Multiplying the equation by $2u'$, we obtain
\[
\frac{d}{dr}(u')^2+2A(u')^2 =\frac{d}{dr} (2F(u)).
\]
Fix $\delta \in(0,r_1)$ and set
\[
B(r):=\exp\left(-2\int_r^\delta A(t)\,dt\right).
\]
Then
\[
\frac{d}{dr} (B(u')^2) = B\frac{d}{dr}(2F(u)).
\]
For $0<r<\delta$,
\[
e^{-2M\delta}\le B(r)\le e^{2M\delta},
\]
while
\[
\frac{d}{dr}F(u)=f(u)u'\le0.
\]
Consequently,
\[
e^{2M\delta }\frac{d}{dr}(2F(u)) \le \frac{d}{dr}\left(B(u')^2\right) \le e^{-2M\delta}\frac{d}{dr}(2F(u)).
\]
Integrating from $r$ to $\delta$, we obtain
\[
e^{-2M\delta}(2F(u)+c_1) \le B(u')^2 \le e^{2M\delta}(2F(u)+c_2)
\]
for some constants $c_1$, $c_2$. Hence
\[
e^{-4M\delta}(2F(u)+c_1) \le (u')^2 \le e^{4M\delta}(2F(u)+c_2).
\]
Since $u'<0$,
\[
\frac{e^{-2M\delta}}{\sqrt{2F(u)+c_2}} \le -\frac{dr}{du} \le \frac{e^{2M\delta}}{\sqrt{2F(u)+c_1}}.
\]
Integrating from $u(r)$ to $\infty$ gives
\[
e^{-2M\delta}\psi_{c_2}(u(r)) \le r\le e^{2M\delta}\psi_{c_1}(u(r)).
\]
Using \eqref{eq4.3}, we conclude that
\[
e^{-2M\delta} \le \liminf_{r\downarrow0}\frac{\psi(u(r))}{r} \le \limsup_{r\downarrow0}\frac{\psi(u(r))}{r} \le e^{2M\delta}.
\]
Letting $\delta\downarrow 0$ proves \eqref{eq4.7}.
\end{proof}

We use the ODE solutions as barriers to derive boundary gradient estimates for blow-up solutions.

\begin{theorem}\label{thm4.8}
Let $\Omega\subset\mathbb R^n$ be a bounded $C^{1,1}$ domain, and let $u$ be a boundary blow-up solution of
\[
\Delta u=f(u)\quad\text{in }\;\Omega.
\]
Assume that
\begin{equation}	\label{cond_growth}
\limsup_{t\to\infty} \psi(t) \,\frac{f(t)}{\sqrt{F(t)}}<\infty,\qquad \psi(t)=\int_t^\infty \frac{ds}{\sqrt{2F(s)}}.
\end{equation}
Then $v:=\psi(u)$ extends to a function in $C^1(\overline\Omega)$ and satisfies
\[
v=0,\qquad \nabla v=\vec n\quad\text{on }\partial\Omega,
\]
where $\vec n$ denotes the inward unit normal. Consequently,
\[
Q_u(x)=\abs{\nabla v(x)}^2 \to 1 \qquad\text{as }\;x\to\partial\Omega.
\]
\end{theorem}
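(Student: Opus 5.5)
We plan to work with $v=\psi(u)$ throughout, because it turns the problem into one with a bounded right-hand side. Since $\psi'(t)=-1/\sqrt{2F(t)}$ and $\psi''(t)=f(t)/(2F(t))^{3/2}$, a direct computation gives
\[
\Delta v=\psi'(u)f(u)+\psi''(u)|\nabla u|^2=\frac{f(u)}{\sqrt{2F(u)}}\bigl(|\nabla v|^2-1\bigr).
\]
Here we used $|\nabla v|^2=|\nabla u|^2/(2F(u))=Q_u$. We rewrite this as
\[
\Delta v=\frac{\psi(u) f(u)}{\sqrt{2F(u)}}\cdot\frac{|\nabla v|^2-1}{v}.
\]
By \eqref{cond_growth}, the first factor $\beta(x):=\psi(u)f(u)/\sqrt{2F(u)}$ is bounded near $\partial\Omega$. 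The plan has three steps:
\begin{enumerate}
\item[(i)] prove two-sided barrier estimates $v(x)=d(x)(1+o(1))$ with a quantitative rate $|v-d|\le Cd^2$;
\item[(ii)] deduce interior gradient bounds and the convergence $\nabla v\to\vec n$ by rescaling;
\item[(iii)] conclude that $v\in C^1(\overline\Omega)$.
\end{enumerate}

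\textbf{Step (i).} Since $\Omega$ is $C^{1,1}$, it satisfies uniform interior and exterior ball conditions with some radius $R$. At each $x_0\in\partial\Omega$ we compare $u$ with radial solutions. In the interior ball $B\subset\Omega$ tangent at $x_0$, the radial minimal large solution $U_B$ satisfies $u\le U_B$ by Remark~\ref{rmk2.2}. In the annulus $\{R<|x-z|<R'\}$ outside the exterior ball, a radial solution blowing up on the inner sphere serves as a lower barrier. After shifting by a constant, $U_B$ may need to be bounded on the outer sphere, so we adjust it appropriately. In distance-to-sphere variables $r$, these radial functions solve \eqref{eq4.4} with $A(r)=\pm(n-1)/(R\mp r)$, which is bounded. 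Lemma~\ref{lem4.6} then gives $\psi(\text{barrier})/r\to1$. This already yields $v/d\to1$ uniformly.

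For the $C^1$ statement we need a rate. We expect to obtain it by repeating the integration in Lemma~\ref{lem4.6} with $\delta$ replaced by $r$ itself. The factor $e^{\pm 2Mr}$ gives $(u')^2=2F(u)(1+O(r))+O(1)$. Condition \eqref{cond_growth} is exactly what makes the additive constants $c$ negligible at order $O(r)$: it controls $\psi_c-\psi$ relative to $\psi^2$ by bounding $\int_t^\infty F^{-3/2}$ against $\psi^2\sqrt F/f$. The result should be $\psi(\text{barrier}(r))=r+O(r^2)$, and hence $d-Cd^2\le v\le d+Cd^2$.

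\textbf{Step (ii).} Fix $x$ with $d(x)=\rho$ small, and rescale $w(y):=v(x+\rho y)/\rho$ on $B_{1/2}$. Then
\[
\Delta w=\rho\,\beta\,\frac{|\nabla w|^2-1}{v}=\frac{\beta}{v/\rho}\bigl(|\nabla w|^2-1\bigr).
\]
Here $v/\rho\approx1$ and $\beta$ is bounded, so $w$ solves an equation with quadratic gradient growth and bounded coefficients, and $w$ is bounded. A Bernstein-type or standard quasilinear estimate (Ladyzhenskaya--Ural'tseva, or \cite{GT}) gives $|\nabla w|\le C$ and then $C^{1,\gamma}$ bounds on $B_{1/4}$.

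From step (i), $|w(y)-(d(x+\rho y)/\rho)|\le C\rho$, and $d(x+\rho y)/\rho$ is $C\rho$-close in $C^1(B_{1/2})$ to the affine function $1+\vec n(x_0)\cdot y$, using the $C^{1,1}$ regularity of $d$ near $\partial\Omega$. Interpolating between the uniform $C^{1,\gamma}$ bound and the $C^0$ closeness gives
\[
|\nabla v(x)-\vec n(x_0)|=|\nabla w(0)-\vec n(x_0)|\le C\rho^{\gamma'}\to0.
\]

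\textbf{Step (iii).} Since $\nabla v$ extends continuously by $\vec n$ and $v\to0$, we get $v\in C^1(\overline\Omega)$, and $Q_u=|\nabla v|^2\to1$.

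We expect the main obstacle to be the quantitative barrier estimate in step (i), specifically showing $|\psi(u)-d|=o(d)$ strongly enough that the rescaled $C^0$ error vanishes, together with the precise role of \eqref{cond_growth} in bounding $\beta$ and the $\psi_c$-versus-$\psi$ discrepancy. Note, however, that the qualitative statement $v/d\to1$ suffices for the rescaling argument if we use compactness instead of a rate. Blow-up limits of $w$ are bounded solutions of $\Delta w=\beta_\infty(|\nabla w|^2-1)$ on a half-space-like region equal to the affine function $1+\vec n\cdot y$ at the $C^0$ level, so their gradients equal $\vec n$. We would therefore try this compactness route first and fall back on the rate only if needed.
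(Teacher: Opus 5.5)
Your outline is sound, and its skeleton matches the paper's. Tangent-ball barriers plus Lemma~\ref{lem4.6} give $v=d+o(d)$ uniformly. Rescaling at scale $d(x)$, uniform $C^{1,\gamma}$ bounds and compactness then give $\nabla v(x)-\vec n(\pi(x))\to0$.

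The genuine difference is where the uniform $C^{1,\gamma}$ bound comes from. You treat $\Delta w=B(y)(|\nabla w|^2-1)$ as a general quasilinear equation with natural growth and invoke Ladyzhenskaya--Ural'tseva. The paper instead observes that this coefficient is a function of $w$ alone: $B=b_\rho(w)$ with $b_\rho(s)=\rho\,a(\rho s)$, $a(s)=f(\phi(s))/\sqrt{2F(\phi(s))}$ and $\phi=\psi^{-1}$. It then substitutes $h(w)$ with $h''=-b_\rho h'$. This cancels the gradient term exactly, $\Delta h(w)=-b_\rho(w)h'(w)$, which is bounded. So only linear $W^{2,p}$ estimates are needed, and no a priori gradient bound.

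Your route is valid provided you use the divergence-form LU interior gradient estimate for bounded solutions, in which the lower-order term needs only the bound $0\le B\le C$ and is never differentiated. The ``Bernstein-type'' option you mention would not work, however. Differentiating the equation produces $b_\rho'(s)=-\rho^2f'(\phi(\rho s))+\rho^2a(\rho s)^2$, and \eqref{cond_growth} gives no control of the $f'$ term, since $f$ is only locally Lipschitz. The paper's trick gives a self-contained, elementary argument. Yours uses heavier machinery, though it would survive coefficients that genuinely depend on $y$.

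Drop the quantitative rate $|v-d|\le Cd^2$ from step (i). Your sketch does not establish it: the $O(r)$ relative error in $(u')^2$ and the comparison of $\int_t^\infty F^{-3/2}$ with $\psi(t)^2$ are asserted rather than derived from \eqref{cond_growth}. It is also unnecessary. Once $\|w-\ell\|_{L^\infty(B_{1/2})}\to0$ with $\ell(y)=1+\vec n\cdot y$, and $w$ is bounded in $C^{1,\gamma}(B_{1/4})$, Arzel\`a--Ascoli (or interpolation) gives $\nabla w(0)\to\vec n$. No limiting equation or ``half-space'' picture is needed.

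Two small points. Remark~\ref{rmk2.2} requires compact containment, so compare on slightly displaced balls and pass to the limit, as the paper does. Also normalize $F>0$ on the range of $u$ so that $v=\psi(u)$ is defined.
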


\begin{proof}
By increasing the additive constant in $F$, if necessary, we may assume that $F>0$ on the range of $u$.
This does not affect condition~\eqref{cond_growth}.
Let
\[
d(x):=\operatorname{dist}(x,\partial\Omega),\qquad
\Omega_{\delta}:=\{x\in\Omega: d(x)<\delta\}.
\]
Since $\Omega$ is $C^{1,1}$, the signed distance to $\partial\Omega$, taken positive in $\Omega$, is $C^{1,1}$ in a tubular neighborhood
of $\partial\Omega$.
Hence, for some $\delta_0>0$, $d$ is $C^{1,1}$ in $\Omega_{\delta_0}$, each $x\in\Omega_{\delta_0}$ has a unique nearest point $\pi(x)\in\partial\Omega$, and
\[
\nabla d(x)=\vec n(\pi(x)).
\]
Moreover, there exists $R>0$ such that, for every $z\in\partial\Omega$,
\[
B_R(z+R \vec n(z))\subset\Omega,\qquad
B_R(z-R\vec n(z))\subset\mathbb R^n\setminus \overline \Omega,
\]
and both balls are tangent to $\partial\Omega$ at $z$.
Let $U_i$ be the minimal boundary blow-up solution in the interior tangent ball.
It is radial, and, writing it as a function of the distance $r$ to the boundary of the ball,
\[
U_i''(r)-\frac{n-1}{R-r}\,U_i'(r)=f(U_i(r)),	 \qquad 0<r<R,
\]
with
\[
U_i(r)\to\infty\quad\text{as }\;r\downarrow0, \qquad U_i'(R)=0.
\]
Let $U_e$ be an exterior radial comparison solution satisfying
\[
U_e''(r)+\frac{n-1}{R+r}\,U_e'(r)=f(U_e(r)), \qquad 0<r<r_0,
\]
with
\[
U_e(r)\to\infty\quad\text{as }r\downarrow0, \qquad U_e(r_0)<\inf_\Omega u,
\]
where $r_0>0$ is independent of $z$.
Such a solution is obtained from a radial solution in the annulus that blows up on $\partial B_R$ and takes a constant value less than $\inf_\Omega u$ on $\partial B_{R+r_0}$.
Applying the comparison principle first to slightly perturbed tangent balls, whose boundaries do not meet $\partial\Omega$, and then letting the perturbation tend to zero, we obtain
\begin{equation}	\label{eq2212tue}
U_e(r)\le u(z+r\vec n(z))\le U_i(r)
\end{equation}
for all $z\in\partial\Omega$ and all sufficiently small $r>0$, uniformly in $z$.
Set
\[
V_i(r):=\psi(U_i(r)),
\qquad
V_e(r):=\psi(U_e(r)).
\]
By Lemma~\ref{lem4.6},
\[
\frac{V_i(r)}r\to1,\qquad \frac{V_e(r)}r\to1 \qquad\text{as }\;r\downarrow0.
\]
Because $\psi$ is decreasing, \eqref{eq2212tue} gives
\[
V_i(r)\le v(z+r\vec n(z))\le V_e(r).
\]
For $x$ sufficiently close to $\partial\Omega$, taking $z=\pi(x)$ and $r=d(x)$ yields
\begin{equation}	\label{eq4.10}
v(x)=d(x)+o(d(x))
\end{equation}
uniformly as $d(x)\to0$.
Hence $v$ extends continuously to $\overline\Omega$ by setting $v=0$ on $\partial\Omega$.

Let $x_k\in\Omega$ satisfy
\[
r_k:=d(x_k)\to0, \qquad z_k:=\pi(x_k).
\]
Choose an orthogonal transformation $\mathsf R_k$ such that
\[
\mathsf R_k\vec e_1=\vec n(z_k)=\nabla d(x_k),
\]
and, for $y\in B_{1/2}$, define
\[
X_k(y):=x_k+r_k\mathsf R_k y, \qquad v_k(y):=\frac{v(X_k(y))}{r_k}, \qquad d_k(y):=\frac{d(X_k(y))}{r_k}.
\]
Since $d$ is $1$-Lipschitz,
\[
\frac{r_k}{2}\le d(X_k(y))\le\frac{3r_k}{2},\qquad\text{for }y\in B_{1/2}.
\]
Hence \eqref{eq4.10} yields
\[
\norm{v_k-d_k}_{L^\infty(B_{1/2})} \to 0.
\]
The $C^{1,1}$ regularity of $d$ gives
\[
d_k(y)=1+y\cdot\vec e_1+O(r_k)
\]
uniformly in $B_{1/2}$.
Hence
\begin{equation}\label{eq4.11}
v_k \to \ell \quad\text{uniformly in }\;B_{1/2}, \qquad \ell(y):=1+y\cdot\vec e_1.
\end{equation}

Let $\phi=\psi^{-1}$. Then
\[
\Delta v=a(v)(\abs{\nabla v}^2-1), \qquad a(s):=\frac{f(\phi(s))}{\sqrt{2F(\phi(s))}},
\]
and hence
\[
\Delta v_k=b_k(v_k)(\abs{\nabla v_k}^2-1), \qquad b_k(s):=r_k a(r_ks).
\]
Condition~\eqref{cond_growth} implies 
\[
s a(s)\le C
\]
for all sufficiently small $s>0$.
By \eqref{eq4.11}, for all sufficiently large $k$,
\[
1/4 \le v_k\le2 \quad\text{in }\;B_{1/2}.
\]
Therefore, after enlarging $C$,
\[
0\le b_k(s)\le C, \qquad 1/4 \le s\le2.
\]
Define
\[
h_k(s):= \int_1^s \exp\left(-\int_1^\tau b_k(\rho)\,d\rho\right)d\tau.
\]
Then, on $[1/4,2]$,
\[
h_k''=-b_kh_k', \qquad C^{-1}\le h_k'\le C, \qquad \abs{h_k''} \le C.
\]
Set $w_k:=h_k(v_k)$. Using the equation for $v_k$, 
\[
\Delta w_k =h_k''(v_k)|\nabla v_k|^2+h_k'(v_k)\Delta v_k =-b_k(v_k)h_k'(v_k).
\]
Hence
\[
\abs{\Delta w_k} \le C \qquad\text{in }\;B_{1/2}.
\]
Since $1/4 \le v_k \le 2$ in $B_{1/2}$, $h_k(1)=0$, and $h_k'$ is uniformly bounded, 
\[
\abs{w_k}=\abs{h_k(v_k)} \le C \qquad\text{in }\;B_{1/2}.
\]
Interior $W^{2,p}$ estimates, with $p>n$, and Sobolev embedding yield, after passing to a subsequence,
\[
w_k\to w\quad\text{in }C^{1,\gamma}(B_{1/4})
\]
for some $0<\gamma<1-n/p$.
Passing to a further subsequence,  by Arzel\`a--Ascoli gives $h_k\to h$ in $C^1([1/4,2])$.
By \eqref{eq4.11}, $w=h(\ell)$, and hence
\[
\nabla w_k(0)\to h'(1)\vec e_1=\vec e_1,
\]
since $h_k'(1)=1$, so $h'(1)=1$.
Moreover, \eqref{eq4.10} gives
\[
v_k(0)=\frac{v(x_k)}{r_k}\to1.
\]
Since $h_k'$ are uniformly Lipschitz and $h_k'(1)=1$,
\[
h_k'(v_k(0))\to1.
\]
Thus, from
\[
\nabla w_k(0)=h_k'(v_k(0))\nabla v_k(0),
\]
we obtain 
\[
\nabla v_k(0)\to\vec e_1. 
\]
Since every subsequence admits a further subsequence with the same limit, this convergence holds for the full sequence.
Since
\[
\nabla v_k(0)=\mathsf R_k^\top\nabla v(x_k), \qquad
\vec e_1=\mathsf R_k^\top\vec n(z_k),\qquad z_k=\pi(x_k),
\]
we obtain
\[
\nabla v(x)-\vec n(\pi(x))\to0 \qquad\text{as }\;d(x)\to0.
\]
For $z\in\partial\Omega$, the $C^{1,1}$ regularity of the boundary gives the expansion
\[
d(x)=\vec n(z)\cdot(x-z)+O(\abs{x-z}^2) \qquad\text{as }\;x\to z,\quad x\in\Omega.
\]
Since $d(x)\le |x-z|$, \eqref{eq4.10} and the preceding expansion yield
\[
v(x)=\vec n(z)\cdot(x-z)+o(\abs{x-z}).
\]
Thus $v$ is differentiable at $z$, with $\nabla v(z)=\vec n(z)$.
Since $\pi(x)\to z$ as $x\to z$ and $\vec n$ is continuous on $\partial\Omega$,
\[
\abs{\nabla v(x)-\vec n(z)} \le \abs{\nabla v(x)-\vec n(\pi(x))}  + \abs{\vec n(\pi(x))-\vec n(z)} \to 0.
\]
Thus $\nabla v$ extends continuously to $\partial\Omega$, with
$\nabla v(z)=\vec n(z)$, and hence $v\in C^1(\overline\Omega)$.
\end{proof}

\begin{corollary}	
Let $\Omega\subset\mathbb R^n$ be a bounded $C^{1,1}$ domain.
Assume that \eqref{cond_growth} holds and that, for some $\alpha\in(0,1)$ and $t_0\in\mathbb R$, condition \eqref{eq3.1} is satisfied.
Then $\Delta u=f(u)$ admits a unique boundary blow-up solution in $\Omega$.
\end{corollary}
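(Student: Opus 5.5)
The corollary combines the existence theory of Section~\ref{sec2} with Theorems~\ref{thm4.8} and~\ref{thm3.9}; the plan is to produce one solution satisfying \eqref{eq3.10} and then invoke uniqueness.

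\textbf{Existence.} A bounded $C^{1,1}$ domain is in particular Lipschitz, hence regular for the Dirichlet problem. The Keller construction of Section~\ref{sec2} therefore applies: take the solutions $u_m$ with boundary value $m$, bound them uniformly via Theorem~\ref{thm2.5}, and pass to the monotone limit. This gives the minimal boundary blow-up solution $u$, so at least one boundary blow-up solution exists.

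\textbf{Gradient condition.} Since \eqref{cond_growth} holds and $\Omega$ is $C^{1,1}$, Theorem~\ref{thm4.8} applies to $u$ and gives $Q_u(x)\to1$ as $x\to\partial\Omega$. The function $Q_u$ here is defined with whichever primitive $F$ is used in Theorem~\ref{thm4.8}, possibly shifted by an additive constant. Because $F(u)\to\infty$ at the boundary, changing $F$ by a constant does not affect this limit. In particular, $\limsup_{x\to\partial\Omega}Q_u(x)\le1$, which is \eqref{eq3.10}.

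\textbf{Uniqueness.} Condition \eqref{eq3.1} holds for some $\alpha\in(0,1)$ by assumption, and no regularity of $\partial\Omega$ is needed. Theorem~\ref{thm3.9} therefore implies that $u$ is the unique boundary blow-up solution. Equivalently, any boundary blow-up solution $\tilde u$ also satisfies the hypotheses of Theorem~\ref{thm3.9}, again by Theorem~\ref{thm4.8}, so uniqueness follows from either $u$ or $\tilde u$.

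The only point needing care is that the primitive $F$ in \eqref{eq3.1} and the one in \eqref{cond_growth} and in $Q_u$ may differ by a constant. This is harmless. Lemma~\ref{lem2.8} keeps the monotonicity in \eqref{eq3.1} when the constant is increased. The integrability condition and \eqref{cond_growth} are insensitive to the constant, since $F+C\sim F$. Finally, the boundary limit of $Q_u$ is unchanged, as noted above. I do not expect any step to pose an obstacle.
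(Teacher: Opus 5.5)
Your proposal is correct and follows the paper's proof: existence of the minimal boundary blow-up solution via the Keller construction, $Q_u\to1$ from Theorem~\ref{thm4.8}, and then uniqueness from Theorem~\ref{thm3.9}. Your remarks on the additive constant in $F$ are accurate and spell out a point the paper leaves implicit. In particular, $Q_u\to1$ holds for every primitive because $F(u)\to\infty$ at the boundary.
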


\begin{proof}
Since $\Omega$ is $C^{1,1}$, it admits a minimal boundary blow-up solution $u$.
By Theorem~\ref{thm4.8},
\[
\lim_{x\to \partial \Omega} Q_u(x) = 1.
\]
The conclusion follows from Theorem~\ref{thm3.9}.
\end{proof}

\section{Uniqueness in convex domains}	\label{sec5}

At $\alpha=1$, the boundary asymptotic $Q_u\to1$ does not determine the sign of $1-Q_u$.
In convex domains, however, a different argument gives $Q_u\le1$ in $\Omega$, restoring the comparison argument when $t\mapsto F(t)^{-1/2} f(t)$ is nondecreasing, equivalently when $\sqrt F$ is convex.
We first assume this convexity on the full range of the minimal boundary blow-up solution. This stronger assumption keeps the proof short while retaining the essential comparison argument.
When the boundary is smooth, the argument below uses only the mean-convexity condition $H \ge 0$; convexity is used to pass to nonsmooth domains by smooth convex approximation.

\begin{theorem}[A special case]	\label{thm5.1}
Let $\Omega$ be a bounded convex domain, and let $u$ be the minimal boundary blow-up solution of
\[
\Delta u = f(u)\quad\text{in }\;\Omega.
\]
Then one can choose a primitive $F$ of $f$ such that
\[
Q_{u}(x) = \frac{\abs{\nabla u(x)}^2}{2F(u(x))} \le 1 \qquad\text{for all }\;x\in\Omega.
\]
If, moreover, $\sqrt F$ is convex on the range of $u$, then $u$ is the unique boundary blow-up solution.
\end{theorem}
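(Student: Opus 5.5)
Theorem~\ref{thm5.1} splits into the gradient bound $Q_u\le1$ and the uniqueness argument; the first is where the work lies.

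\textbf{Step 1: smooth strictly convex domains.} Let $D$ be a smooth strictly convex domain and $U$ its minimal boundary blow-up solution. Choose $F$ so large that $F>0$ on the range of $U$, and set $P:=\abs{\nabla U}^2-2F(U)$. Bochner's formula together with $\Delta U=f(U)$ gives
\[
\Delta P=2\abs{D^2U}^2+2\nabla U\cdot\nabla f(U)-2f'(U)\abs{\nabla U}^2-2f(U)^2=2\abs{D^2U}^2-2f(U)^2
\]
where $f$ is differentiable; otherwise mollify $f$. Using $\abs{D^2U}^2\ge(\Delta U)^2/n$ together with the refined Kato-type inequality in the direction of $\nabla U$, the standard computation (as in \cite{CDG2012}) yields
\[
\Delta P-\vec b\cdot\nabla P\ge0 \qquad\text{on }\;\{\nabla U\ne0\}
\]
for a locally bounded field $\vec b$. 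At points where $\nabla U=0$ we have $P=-2F(U)<0$, so these points do not matter at a positive maximum. It then suffices to show $\limsup_{x\to\partial D}P\le0$, since the maximum principle then gives $P\le0$, that is, $Q_U\le1$.

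\textbf{Step 2: the boundary behaviour, the main obstacle.} This is the difficult step, because $Q_U\to1$ alone does not control $P=2F(U)(Q_U-1)$, which is an $\infty\cdot0$ expression. I would proceed as follows. Since $D$ is smooth and $H\ge0$, compare $U$ near $\partial D$ with the one-dimensional profile. In the boundary coordinates, $U$ behaves like the solution of \eqref{eq4.4} with $A=-(n-1)H$ plus lower-order terms. Integrating as in Lemma~\ref{lem4.6} with $A\le0$ gives $U'^2\le 2F(U)+c$, and the constant $c$ is absorbed by enlarging $F$.

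If this pointwise bound is too hard to obtain directly, I would fall back on the approach of \cite{CDG2012}: apply the maximum principle to $P_\epsilon:=\abs{\nabla U}^2-2F(U)-\epsilon U$, or on the sublevel domains $\{U<m\}$, which are convex for the minimal solution by results on level-set convexity. On their boundaries, $\partial_\nu P$ has a sign determined by the mean curvature $H\ge0$, via the identity $\Delta U=U_{\nu\nu}+(n-1)H\,U_\nu$. This Hopf-type boundary argument is the step I expect to require the most care.

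\textbf{Step 3: general convex domains and uniqueness.} Approximate $\Omega$ from inside by smooth strictly convex $D_k\uparrow\Omega$. By Remark~\ref{rmk2.2} the solutions $U_k$ decrease, and their limit is the minimal solution $u$ on $\Omega$: the limit is a blow-up solution by convexity, via barriers from supporting half-spaces, and it is $\le$ any solution. Interior $C^1$ convergence gives $Q_u\le1$. The primitive $F$ must be fixed uniformly in $k$; this works because $U_k\ge U_B$ for a common ball $B$ by Lemma~\ref{lem2.3}.

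For uniqueness, run the proof of Theorem~\ref{thm3.2} with $\alpha=1$, $h=\sqrt2\,\psi$ and $a=F^{-1/2}f$. Convexity of $\sqrt F$ on the range of $u$ makes $a$ nondecreasing, and $1-Q_u\ge0$ makes $I_2\ge0$. The terms $I_1$ and $I_4$ now involve $F^{1-\alpha}=1$, so they vanish or reduce to drift terms. The first comparison $u\ge\tilde u$ is automatic by minimality. For $\tilde u\ge u$, the set $\tilde D_\epsilon$ has $\tilde u<u$, so the relevant values lie in the range of $u$ and the monotonicity of $a$ there suffices.
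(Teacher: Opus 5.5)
\textbf{Step 2 is a genuine gap.} Step 2 is where the whole gradient bound lives. For the blow-up solution $U$, the quantity $P=\abs{\nabla U}^2-2F(U)=2F(U)(Q_U-1)$ has the form $\infty\cdot0$ at $\partial D$. The bound $\limsup_{x\to\partial D}P\le C$ amounts to $Q_U-1\le C/(2F(U))$, which is far finer than the $o(1)$ of Theorem~\ref{thm4.8}.

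Your one-dimensional heuristic does not give this. The monotonicity of $(u')^2-2F(u)$ that you borrow from Lemma~\ref{lem4.6} holds for an exact ODE with $A\le0$, for instance the radial solution in a ball. But $U$ is not a function of $d$, and nothing in your argument controls the tangential contributions to $\abs{\nabla U}^2$ and $\Delta U$ at the scale $1/F(U)$.

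The fallback on $\{U<m\}$ needs smooth, mean-convex level sets of $U$. You only assert this, and it is not a standard fact for general nondecreasing $f$. Subtracting $\epsilon U$ does not remove the $\infty-\infty$ at $\partial D$ either.

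The missing idea is never to apply the maximum principle to a blow-up solution. Apply it instead to the Dirichlet approximants $\Delta u_m=f(u_m)$ in $\Omega$, $u_m=m$ on $\partial\Omega$, after smoothing $f$ and replacing $\Omega$ by smooth convex subdomains. Then $P_m=\frac12\abs{\nabla u_m}^2-F(u_m)$ is bounded up to the boundary, and $\partial\Omega$ is itself a level set of $u_m$. The identity $\Delta u_m=(u_m)_{\nu\nu}+H(u_m)_\nu$ gives $\partial_\nu P_m=-H(u_m)_\nu^2\le0$, where $H\ge0$ is the mean curvature of the convex boundary. Hopf then excludes a positive boundary maximum, so $P_m\le0$ with only the normalization $F>0$. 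No additive constant $c$, and no uniformity question in the domain, ever enters.

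\textbf{Step 3 runs in the wrong direction.} By Remark~\ref{rmk2.2}, the minimal solutions $U_k$ on an interior exhaustion $D_k\uparrow\Omega$ decrease. Any solution $v$ in $\Omega$ is bounded on $\overline{D_k}$, so comparison gives $U_k\ge v$. The limit is therefore the Loewner--Nirenberg maximal solution of Section~\ref{sec2}, not the minimal one. Even granting Step 2, you would obtain $Q\le1$ only for the maximal solution. The first assertion of the theorem concerns the minimal solution and assumes nothing about $\sqrt F$, so it would not follow. The approximants $u_m$ above, by contrast, increase to the minimal solution $u$, so $Q_u\le1$ comes out directly.

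\textbf{The uniqueness paragraph swaps the two directions.} Minimality gives $u\le\tilde u$. What must be excluded is therefore $D_\epsilon=\{\psi(u)-\psi(\tilde u)>\epsilon\}$, where $\tilde u>u$. There both values lie in the range of $u$. With $a=f/\sqrt{2F}$, the term $(a(\tilde u)-a(u))(1-\abs{\nabla\psi(u)}^2)$ is nonnegative, which gives $\Delta w\ge a(\tilde u)\nabla(\psi(u)+\psi(\tilde u))\cdot\nabla w$ for $w=\psi(u)-\psi(\tilde u)-\epsilon$. This is the first half of the proof of Theorem~\ref{thm3.2} at $\alpha=1$, and it is what the paper does. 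The set $\tilde D_\epsilon$ you treat instead is empty for trivial reasons.
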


\begin{proof}
For each $m \in \mathbb N$, let $u_m$ solve
\[
\Delta u_m=f(u_m)\quad\text{in }\;\Omega, \qquad
u_m=m\quad\text{on }\;\partial\Omega.
\]
Then $u_m\uparrow u$ locally in $C^2_{\rm loc}(\Omega)$ as $m\to\infty$.
By adding a nonnegative constant to $F$ if necessary, we may assume that $F>0$ on the range of $u_m$ for every sufficiently large $m$.
By Lemma~\ref{lem2.8}, this does not affect the monotonicity of $t\mapsto F(t)^{-1/2}f(t)$.

For each fixed $m$, approximate $f$ locally uniformly by smooth, nonnegative, nondecreasing functions, and exhaust $\Omega$ from within by smooth convex domains.
By standard elliptic estimates and stability, the corresponding solutions converge to $u_m$ in $C^1_{\mathrm{loc}}(\Omega)$. Hence it suffices to prove the estimate when $f$, $\Omega$, and $u_m$ are smooth.
Set
\[
P_m:=\frac12 \,\abs{\nabla u_m}^2-F(u_m).
\]
On the set  $\{P_m>0\}$, we have $\nabla u_m\ne 0$ and
\[
\Delta P_m -\frac{2f(u_m)}{\abs{\nabla u_m}^2}\, \nabla u_m\cdot\nabla P_m \ge 0.
\]
This is a standard calculation; see, for example, \cite{PSS}.
Applying the maximum principle on each connected component of $\{P_m>0\}$, we see that any positive maximum of $P_m$ must be attained on $\partial\Omega$.
Let $\nu$ denote the outward unit normal and $H=\operatorname{div} \nu$ the mean curvature of $\partial\Omega$. Since $u_m$ is constant on $\partial\Omega$,
\[
\Delta u_m=(u_m)_{\nu\nu}+H (u_m)_\nu \quad\text{on }\partial\Omega.
\]
Consequently,
\[
\partial_\nu P_m =(u_m)_\nu \left((u_m)_{\nu\nu}-f(u_m)\right) =-H (u_m)_\nu^2 \le 0,
\]
since a smooth convex domain has $H \ge 0$.
The Hopf lemma therefore rules out a positive boundary maximum of $P_m$. Hence
\[
\abs{\nabla u_m}^2 \le 2F(u_m) \quad\text{in }\;\Omega.
\]
Passing first to the limit in the smooth approximations and then letting $m\to\infty$, we obtain $Q_u\le1$ in $\Omega$.

It remains to prove uniqueness.
Let $\tilde u$ be any other boundary blow-up solution. 
By the minimality of $u$, we have $u \le \tilde u$ in $\Omega$. Set
\[
v:=\psi(u), \qquad \tilde v:=\psi(\tilde u),\qquad \psi(t)=\int_t^\infty \frac{ds}{\sqrt{2F(s)}}.
\]
Suppose, for contradiction, that $u<\tilde u$ at some point in $\Omega$.
Since $\psi$ is strictly decreasing, there exists $\epsilon>0$ such that
\[
D_\epsilon := \{x\in\Omega: v-\tilde v>\epsilon\} \ne\emptyset.
\]
Define  $w:= v- \tilde v-\epsilon$.
Since $v-\tilde v\to0$ as $x \to \partial\Omega$, we have $D_\epsilon \Subset\Omega$ and $w=0$ on $\partial D_\epsilon$.
Moreover, $\tilde u>u$ in $D_\epsilon$. Set
\[
a(t):=\frac{f(t)}{\sqrt{2F(t)}}.
\]
Using the monotonicity of $a$,  together with $\abs{\nabla v}^2=Q_u\le1$, we obtain
\begin{equation}	\label{eq1527thu}
\begin{aligned}
\Delta w &= a(u)(\abs{\nabla v}^2-1) -a(\tilde u)(\abs{\nabla \tilde v}^2-1)\\
&= a(\tilde u)(\abs{\nabla v}^2-\abs{\nabla \tilde v}^2)+(a(\tilde u)-a(u)) (1-\abs{\nabla v}^2)\\
&\ge a(\tilde u)\nabla(v+\tilde v)\cdot\nabla w
\end{aligned}
\end{equation}
in $D_\epsilon$.
The maximum principle therefore yields $w\le 0$ in $D_\epsilon$, contradicting the definition of $D_\epsilon$.
Hence $u=\tilde u$.
\end{proof}

The preceding gradient estimate also holds on smooth mean-convex domains, since its boundary argument requires only $H \ge 0$.
We next relax the assumption on $\sqrt F$, requiring convexity only on $[t_0,\infty)$.

\begin{theorem}	\label{thm5.3}
Let $\Omega\subset\mathbb R^n$ be a bounded convex domain. If $\sqrt F$ is convex on $[t_0,\infty)$ for some $t_0\in\mathbb R$, then
\[
\Delta u = f(u)\quad\text{in }\;\Omega
\]
admits a unique boundary blow-up solution.
\end{theorem}

\begin{proof}
Since $\Omega$ is convex, there exists the minimal boundary blow-up solution $u$.
Let $\tilde u$ be any other boundary blow-up solution.
We show that $u=\tilde u$.

After increasing $t_0$, if necessary, we may assume $f(t_0)>0$. 
Fix $t_1>t_0$.
Since $u(x)\to\infty$ as $x\to\partial\Omega$, there exists a smooth convex domain $\Omega_0 \Subset\Omega$ such that $\tilde u\ge u\ge t_1$ in $\Omega \setminus \Omega_0$.
We choose $\Omega_0$ sufficiently close to $\partial\Omega$ that
\begin{equation}	\label{eq1418thu}
\sup_{x \in \Omega} \,\dist(x, \Omega_0) \le \delta:=\sqrt{\frac{2(t_1-t_0)}{f(t_1)}}>0.
\end{equation}
Choose a smooth convex exhaustion
\[
\Omega_0 \Subset\Omega_j\Subset\Omega_{j+1}\Subset\Omega, \qquad\bigcup_{j=1}^{\infty}\Omega_j=\Omega.
\]
After discarding finitely many terms, we may assume that
\[
m_j:=\min_{\partial\Omega_j}u \ge t_1.
\]
For $v=u$ or $v=\tilde u$, let $v_j$ solve
\begin{equation}\label{eq5.5}
\left\{
\begin{aligned}
\Delta v_j&=f(v_j)&&\text{in }\;\Omega_j\setminus \overline \Omega_0,\\
v_j&=v&&\text{on }\;\partial \Omega_0,\\
v_j&=m_j&&\text{on }\;\partial\Omega_j.
\end{aligned}
\right.
\end{equation}
Since $u \le v$, the comparison principle gives
\begin{equation}\label{eq5.6}
v_j\le v\qquad\text{in }\;\Omega_j\setminus \overline \Omega_0.
\end{equation}

We next show that $v_j \ge t_0$.
Define
\[
\underline v(x) := t_0+\frac{t_1-t_0}{\delta^2}\,\rho(x)^2,\qquad \text{where }\;\rho(x):=\dist(x, \Omega_0).
\]
By \eqref{eq1418thu} and definition of $\rho(x)$, we have $t_0 \le\underline v\le t_1$ in $\Omega\setminus\overline \Omega_0$ and 
\[
\underline v=t_0 \le v \quad\text{on }\;\partial \Omega_0,
\qquad \underline v\le t_1 \le m_j \quad\text{on }\;\partial\Omega_j.
\]
Moreover, since $\abs{\nabla\rho}=1$ and $\Delta\rho\ge 0$ in $\mathbb R^n\setminus\overline \Omega_0$, we have
\[
\Delta\underline v = \frac{2(t_1-t_0)}{\delta^2} \left(\abs{\nabla \rho}^2+\rho\Delta\rho\right) \ge \frac{2(t_1-t_0)}{\delta^2}\quad\text{in }\;\Omega \setminus\overline \Omega_0.
\]
Since $f$ is nondecreasing and $\underline v\le t_1$, it follows that
\[
\Delta\underline v\ge f(t_1)\ge f(\underline v).
\]
Thus $\underline v$ is a subsolution of \eqref{eq5.5}. The
comparison principle yields
\begin{equation}\label{eq5.7}
v_j\ge\underline v\ge t_0 \qquad\text{in }\;\Omega_j\setminus\overline \Omega_0.
\end{equation}

We now establish a uniform estimate for
\[
P_j:=\frac12 \abs{\nabla v_j}^2-F(v_j)\quad \text{in }\;\Omega_j\setminus\overline \Omega_0.
\]
By the argument used in the proof of Theorem~\ref{thm5.1},
\begin{equation}\label{eq5.8}
P_j\le\max\left\{0,\,\sup_{\partial \Omega_0} P_j\right\} \qquad\text{in }\;\Omega_j\setminus\overline \Omega_0.
\end{equation}
The right-hand side of \eqref{eq5.8} is bounded independently of $j$.
Indeed, in a fixed neighborhood of $\partial \Omega_0$ we have, by \eqref{eq5.6} and \eqref{eq5.7},
\[
t_0\le v_j\le v,
\]
while the boundary value of $v_j$ on $\partial \Omega_0$ is the fixed function $v$.
Standard local boundary estimates therefore yield
\[
\sup_j \,\norm{\nabla v_j}_{L^\infty(\partial \Omega_0)}<\infty.
\]
Applying this argument to the families corresponding to both $v=u$ and $v=\tilde u$, we may choose a single constant $C\ge0$ such that
\[
\frac12\,\abs{\nabla v_j}^2\le F(v_j)+C \qquad\text{in }\;\Omega_j\setminus\overline \Omega_0.
\]
Replacing $F+C$ by $F$, we obtain
\begin{equation}\label{eq5.9}
\frac12\,\abs{\nabla v_j}^2\le F(v_j) \qquad\text{in }\;\Omega_j\setminus\overline \Omega_0
\end{equation}
for both families of solutions.
Set
\[
V_j:=\psi(v_j), \qquad V:=\psi(v),\qquad \psi(t)=\int_t^\infty \frac{ds}{\sqrt{2F(s)}},\qquad a(t):=\frac{f(t)}{\sqrt{2F(t)}}.
\]
Since $\psi$ is decreasing, \eqref{eq5.6} gives $V_j\ge V$. 
As in \eqref{eq1527thu},
\begin{align*}
\Delta(V_j-V) &= a(v_j)(\abs{\nabla V_j}^2-1) - a(v)(\abs{\nabla V}^2-1) \\ 
&= a(v) (\abs{\nabla V_j}^2-\abs{\nabla V}^2)
+(a(v)- a(v_j)) (1-\abs{\nabla V_j}^2).
\end{align*}
By \eqref{eq5.7}, \eqref{eq5.6}, the monotonicity of $a$, and \eqref{eq5.9}, the last term is nonnegative in $\Omega_j\setminus\overline \Omega_0$.
Hence
\[
\Delta(V_j-V)- a(v)\nabla(V_j+V)\cdot\nabla(V_j-V) \ge0 \quad\text{in }\;\Omega_j\setminus\overline \Omega_0.
\]
On $\partial \Omega_0$ we have $V_j=V$.
On $\partial\Omega_j$, we have $V_j=\psi(m_j)$ and $0\le V\le\psi(m_j)$, since $v\ge m_j$ there.
The maximum principle therefore gives
\[
0\le V_j-V\le\psi(m_j) \qquad\text{in }\;\Omega_j\setminus\overline \Omega_0.
\]
Since $m_j\to\infty$, we have $\psi(m_j)\to0$. By the strict monotonicity of $\psi$,
\begin{equation}\label{eq5.10}
v_j \to v \qquad\text{locally uniformly in }\;\Omega \setminus \overline \Omega_0.
\end{equation}

Let $u_j$ and $\tilde u_j$ denote the solutions of \eqref{eq5.5} corresponding to $u$ and $\tilde u$, respectively.
They have the same boundary value $m_j$ on $\partial\Omega_j$, while $u\le \tilde u$ on $\partial \Omega_0$.
The comparison principle therefore gives
\[
u_j\le \tilde u_j \qquad\text{in }\;\Omega_j\setminus\overline \Omega_0.
\]
Set $w_j:=\tilde u_j-u_j$.
Then
\[
\Delta w_j=q_j(x) w_j \qquad\text{in }\Omega_j\setminus\overline \Omega_0,
\]
where $q_j(x) \ge0$ is bounded. Moreover,
\[
w_j=0\quad\text{on }\partial\Omega_j, \qquad w_j=\tilde u-u \quad\text{on }\partial \Omega_0.
\]
The maximum principle yields
\[
0\le w_j\le\max_{\partial \Omega_0}\,(\tilde u -u) \qquad\text{in }\;\Omega_j\setminus\overline \Omega_0.
\]
Letting $j\to\infty$ and using \eqref{eq5.10}, we obtain
\begin{equation}\label{eq5.11}
0\le \tilde u -u\le\max_{\partial \Omega_0}\,(\tilde u-u) \qquad\text{in }\;\Omega \setminus\overline \Omega_0.
\end{equation}
On $\Omega_0$, the function $w:=\tilde u-u$ satisfies $\Delta w=q(x) w$ with $q(x) \ge 0$.
Hence
\[
w\le\max_{\partial \Omega_0} w \qquad\text{in }\;\Omega_0.
\]
Together with \eqref{eq5.11}, this shows that $w$ attains its global maximum at a point of  $\partial \Omega_0$.
Since $\partial \Omega_0 \subset\Omega$, the strong maximum principle gives
\[
\tilde u-u\equiv c
\]
for some constant $c\ge0$.
As in the proof of Theorem~\ref{thm3.9}, we obtain $c=0$.
Hence $u=\tilde u$, proving the theorem.
\end{proof}

\begin{remark}
The same argument applies to a bounded $C^3$ mean-convex domain.
Indeed, if $d(x)=\dist(x,\partial\Omega)$, then for all sufficiently small $s>0$, the domains $\Omega^s=\{d>s\}$ are smooth and mean-convex.
Thus $\Omega_0$ and the exhaustion $\{\Omega_j\}$ may be chosen among these domains.
For $\rho=\dist(\,\cdot\,,\Omega_0)$, we then have $\abs{\nabla\rho}=1$ and $\Delta\rho\ge0$ in $\Omega\setminus\overline\Omega_0$, so the arguments leading to \eqref{eq5.7} and \eqref{eq5.8} remain unchanged.
This recovers the result of \cite{CDG2012}.

The Keller--Osserman condition and convexity of $\sqrt F$ imply that $f$ is superadditive up to a constant; see Lemma~\ref{prop2.17}.
However, \cite{LGM2017} requires $C^1$ boundary and a uniform interior sphere condition, whereas the present result applies to arbitrary bounded convex domains.
\end{remark}

\section{Appendix}

We compare our structural assumptions with related criteria in the literature.

\begin{lemma}\label{prop2.10}
Assume that \eqref{eq3.1} holds for some $0<\alpha\le1$ and $t_0\in\mathbb R$.
Define
\[
g(\ell):=\inf_{s\ge0}\,\{f(s+\ell)-f(s)\},\qquad G(t):=\int_0^t g(\ell)\,d\ell.
\]
Then
\begin{equation}\label{eq2.11}
\int^\infty \frac{dt}{\sqrt{G(t)}}<\infty.
\end{equation}
\end{lemma}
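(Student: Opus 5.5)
The plan is to bound the increment function $g$ from below by a quantity built from $F$ itself. Then $G$ inherits enough growth that \eqref{eq2.11} follows from the integrability part of \eqref{eq3.1}. I would work with
\[
a(t)=F(t)^{-\alpha/2}f(t),\qquad \Phi:=F^{1-\alpha/2}.
\]
By the remark after \eqref{eq3.1}, $\Phi$ is convex on $[t_0,\infty)$ and $\Phi'=(1-\alpha/2)a$. Since $f$ is not eventually zero (Keller--Osserman), I first enlarge $t_0$, using Lemma~\ref{lem2.8} to adjust the constant in $F$ if needed, so that $F>0$ and $a(t_0)>0$ on $[t_0,\infty)$. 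The infimum over $s\ge0$ then splits into two ranges. For $0\le s\le t_0$, monotonicity of $f$ gives $f(s+\ell)-f(s)\ge f(\ell)-f(t_0)$. So it remains to control $s\ge t_0$ uniformly in $s$.

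For $s\ge t_0$, I use $a(s)\le a(s+\ell)$ in the form
\[
f(s+\ell)-f(s)\ge f(s+\ell)\Bigl(1-\bigl(F(s)/F(s+\ell)\bigr)^{\alpha/2}\Bigr).
\]
Next I use $1-x^{\alpha/2}\ge \frac{\alpha}{2}(1-x)$ on $[0,1]$ and then $F(s+\ell)-F(s)\ge \ell f(s)$, together with monotonicity of $a$, to express the right-hand side through $a$ and $F$. An alternative route for this step is the convexity of $\Phi$ in the form $\Phi(s+\ell)-\Phi(s)\ge\Phi(t_0+\ell)-\Phi(t_0)$. The aim is a bound of the type $f(s+\ell)-f(s)\ge c\,\Theta(\ell)$ with $c>0$ independent of $s$. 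Here $\Theta$ is an explicit nondecreasing function such as $a(t_0)\bigl(F(t_0+\ell)^{\alpha/2}-F(t_0)^{\alpha/2}\bigr)$ or $f(t_0+\ell/2)-f(t_0)$.

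Given such a $\Theta$, I integrate to get $G(t)\ge c\int_0^t\Theta$. I then compare $\int^\infty (\int_0^t\Theta)^{-1/2}\,dt$ with $\int^\infty F^{-\alpha/2}$, which is finite by \eqref{eq3.1}. In the favourable case $\Theta(\ell)\gtrsim F(\ell/2)^{\alpha/2}\cdot(\text{const})$, this gives $G(t)\gtrsim t\,F(t/4)^{\alpha/2}$. That is far more than enough, since $\int^\infty (tF(t/4)^{\alpha/2})^{-1/2}dt$ is finite by Cauchy--Schwarz with $\int^\infty t^{-1}F^{-\alpha/2}$. The shift from $t$ to $t/4$ is harmless after a change of variables.

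The main obstacle is the uniformity in large $s$ in the second step. The naive chain produces a factor like $\Phi(s+\ell)^{\gamma-1}$ with $\gamma=\alpha/(2-\alpha)<1$, which degenerates as $s\to\infty$. I would first try to avoid this by splitting $\ell$ in half and writing $f(s+\ell)-f(s)\ge f(s+\ell)-f(s+\ell/2)$. Then I would use the convexity of $\Phi$ on the window $[s,s+\ell]$, via $\Phi'(s+\ell)\ge(\Phi(s+\ell)-\Phi(s+\ell/2))/(\ell/2)$. This should make the relative growth of $F$ across the window large enough to beat the decay of $F(s)/F(s+\ell)$ toward $1$.

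If that fails, the fallback is weaker but sufficient: show only that $g(\ell)\to\infty$ at a rate for which $G(t)\ge c\,t^{1+\eta}$ for some $\eta>0$. That already gives \eqref{eq2.11}. It should follow from the fact that $F^{-\alpha/2}$ integrable forces $F$ to grow faster than $t^{2/\alpha}$ in an averaged sense.
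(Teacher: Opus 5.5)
Your proposal leaves the decisive step open. You never establish a lower bound for $f(s+\ell)-f(s)$ that is uniform in $s\ge t_0$, and the targets you set would not give \eqref{eq2.11} even if you reached them. The ``favourable case'' $G(t)\gtrsim t\,F(t/4)^{\alpha/2}$ is not enough. Take $F(t)=(t\log^2t)^{2/\alpha}$ for large $t$. Then $\int^\infty F^{-\alpha/2}\,dt=\int^\infty\frac{dt}{t\log^2t}<\infty$ and $F^{1-\alpha/2}=(t\log^2t)^{(2-\alpha)/\alpha}$ is eventually convex, so \eqref{eq3.1} holds. Yet $\bigl(t\,F(t/4)^{\alpha/2}\bigr)^{-1/2}\sim\frac{2}{t\log t}$ is not integrable; your Cauchy--Schwarz step would have to pair against $\int^\infty t^{-1}\,dt=\infty$. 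Your fallback $G(t)\ge c\,t^{1+\eta}$ with $\eta>0$ also fails, since $\int^\infty t^{-(1+\eta)/2}\,dt<\infty$ requires $\eta>1$. The underlying problem is that candidates like $\Theta(\ell)=a(t_0)\bigl(F(t_0+\ell)^{\alpha/2}-F(t_0)^{\alpha/2}\bigr)$ discard the factor $a=F^{-\alpha/2}f=\Phi'/(1-\alpha/2)$, which tends to infinity when $\alpha<1$. Moreover, moving that bound from $t_0$ to a general $s$ would need increasing increments of $F^{\alpha/2}$, i.e.\ $f'\ge(1-\frac\alpha2)f^2/F$. The hypothesis only gives $f'\ge\frac\alpha2 f^2/F$, and $\frac\alpha2<1-\frac\alpha2$.

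The missing ingredient is a pointwise lower bound on $f'$ that does not depend on $s$; this is how the paper argues. Monotonicity of $a$ is equivalent to $f'\ge\frac\alpha2 f^2/F$ a.e. Lemma~\ref{lem_convex} applied to $F^{1-\alpha/2}$ gives $f(t)\ge c\,F(t)/t$. The estimate $\frac t2F(t)^{-\alpha/2}\le\int_{t/2}^tF^{-\alpha/2}\to0$ gives $F(t)\ge c\,t^{2/\alpha}$. Together these yield $f'(t)\ge c\,t^{2/\alpha-2}$ for large $t$. Integrating over $[s,s+\ell]$ and using $(s+\ell)^p-s^p\ge\ell^p$ with $p=2/\alpha-1>1$ gives $g(\ell)\gtrsim\ell^{2/\alpha-1}$, hence $G(r)\gtrsim r^{2/\alpha}$ with $2/\alpha>2$. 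Your halving idea would also close once you have these two growth facts. For $u=s+\ell/2\ge\ell/2$, your own chain gives $f(s+\ell)-f(u)\ge\frac{\alpha\ell}{4}\frac{f(s+\ell)}{F(s+\ell)}f(u)\gtrsim\ell\,u^{2/\alpha-2}\gtrsim\ell^{2/\alpha-1}$. Finally, $\alpha=1$ must be handled separately, because no power gain is available there. For example, $F=t^2\log^4t$ (for large $t$) has $\sqrt F$ convex, but $g(\ell)\le f(\ell)-f(0)=O(\ell\log^4\ell)$, so your fallback cannot hold. One must compare $G$ with $F$ itself. With $h=\sqrt F$, the inequality $f'\ge2(h')^2$ gives $g(\ell)\gtrsim\ell\,h'(\ell/2)^2$. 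Hence $G(r)\gtrsim r^2h'(r/4)^2\gtrsim h(r/4)^2=F(r/4)$ by Lemma~\ref{lem_convex}, and the Keller--Osserman condition transfers to $G$.
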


\begin{proof}
We first consider $0<\alpha<1$.
We may assume that $F>0$ and $F^{-\alpha/2}f$ is nondecreasing on $[t_0,\infty)$.
Since $f$ is locally Lipschitz, for almost every $t\ge t_0$,
\begin{equation}\label{eq2.14}
(F^{-\alpha/2}f)' \ge 0 \iff f'(t)\ge \frac{\alpha f(t)^2}{2F(t)}.
\end{equation}
Applying Lemma~\ref{lem_convex} to
\[
h:=F^{1-\alpha/2}, 
\]
after increasing $t_0$ if necessary, we obtain
\[
\left(1-\frac{\alpha}{2}\right)F(t)^{-\alpha/2}f(t) = h'(t) \ge \frac{h(t)-h(t_0)}{t-t_0} \ge\frac{h(t)}{2t} = \frac{F(t)^{1-\alpha/2}}{2t},
\]
for all $t \ge t_0$.
Hence
\begin{equation}\label{eq2.15}
f(t)\ge c_0\frac{F(t)}{t},\qquad c_0>0.
\end{equation}
Combining \eqref{eq2.14} and \eqref{eq2.15}, we obtain for almost every $t \ge t_0$,
\begin{equation}\label{eq2.16}
f'(t)\ge c_1\frac{F(t)}{t^2},\qquad c_1>0.
\end{equation}
Since $F^{-\alpha/2}$ is nonincreasing and integrable at infinity,
\[
\frac t2 F(t)^{-\alpha/2} \le \int_{t/2}^t F(s)^{-\alpha/2}\,ds \to 0\qquad \text{as }\; t \to \infty.
\]
Thus, by increasing $t_0$ if necessary, and using \eqref{eq2.16}, for almost all $t \ge t_0$, we obtain
\[
f'(t)\ge c_2 t^{2/\alpha-2},\qquad c_2>0.
\]
If $s\ge t_0$, then for $\ell>0$,
\[
f(s+\ell)-f(s) \ge c_2\int_s^{s+\ell}t^{2/\alpha-2}\,dt
=\frac{c_2}{2/\alpha-1} \left((s+\ell)^{2/\alpha-1}-s^{2/\alpha-1}\right)
\ge \frac{c_2}{2/\alpha-1}\ell^{2/\alpha-1},
\]
since $2/\alpha-1>1$. If $0\le s<t_0$, monotonicity of $f$ gives
\[
f(s+\ell)-f(s)\ge f(\ell)-f(t_0),
\]
and integrating the same derivative estimate shows that for all sufficiently large $\ell$,
\[
f(\ell)-f(t_0)\ge c_3\ell^{2/\alpha-1},\qquad c_3>0.
\]

Taking the infimum over $s\ge 0$, we obtain
\[
g(\ell)\ge c_4\,\ell^{2/\alpha-1},\qquad c_4>0,
\]
for all sufficiently large $\ell$.
Consequently,
\[
G(r)\ge c r^{2/\alpha}
\]
for all $r$ sufficiently large.
Since $1/\alpha>1$, \eqref{eq2.11} follows.

It remains to consider $\alpha=1$.
Let $h=\sqrt{F}$.
Since $h$ is convex on $[t_0,\infty)$,
\[
f'=2(h')^2+2h''h \ge 2(h')^2
\] 
for almost every $t \ge t_0$.
If $s\ge t_0$, then, for all sufficiently large $\ell$,
\[
f(s+\ell)-f(s) \ge 2\int_s^{s+\ell} (h'(t))^2\,dt \ge 2\int_{s+\ell/2}^{s+\ell}(h'(t))^2\,dt
\ge \ell(h'(\ell/2))^2.
\]
If $0\le s<t_0$, then monotonicity of $f$ gives
\begin{equation}	\label{eq1703wed}
f(s+\ell)-f(s)\ge f(\ell)-f(t_0).
\end{equation}
Since $h>0$ and is convex on $[t_0,\infty)$, for all sufficiently large $\ell $,
\[
h(\ell) > h(\ell)-h(\ell/2) \ge \frac{\ell}{2}\,h'(\ell/2), \qquad h'(\ell)\ge h'(\ell/2),
\]
and therefore, for all sufficiently large $\ell$,
\[
f(\ell) =2h(\ell)\,h'(\ell) \ge \ell(h'(\ell/2))^2.
\]
Since $h'(t) \ge c$ for some constant $c>0$ for all sufficiently large $t$, 
\[
f(\ell)\to \infty \qquad\text{as }\;\ell \to \infty.
\]
Hence, the fixed term $f(t_0)$ in \eqref{eq1703wed} can be absorbed.
Taking the infimum over $s\ge0$, we obtain
\[
g(\ell)\ge c\,\ell(h'(\ell/2))^2
\]
for all sufficiently large $\ell$.
It follows that, for large $r$,
\[
G(r) \ge c\int_{r/2}^r \ell (h'(\ell/2))^2\,d\ell \ge c r^2(h'(r/4))^2 \ge c h(r/4)^2,
\]
where we applied Lemma~\ref{lem_convex} to $h=\sqrt F$ in the last step.
Since $h(r/4)^2=F(r/4)$, the Keller--Osserman condition yields \eqref{eq2.11}.
\end{proof}

\begin{lemma}\label{lem_convex}
Let $h:[t_0,\infty)\to\mathbb R$ be differentiable and convex, and suppose
\[
h(t) \to\infty \qquad \text{as }\; t\to\infty.
\]
Then there exists $t_1>t_0$ such that for all $t > t_1$,
\[
h'(t)\ge \frac{h(t)-h(t_1)}{t-t_1} \ge \frac{h(t)}{2t}.
\]
\end{lemma}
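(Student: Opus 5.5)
The first inequality is the standard slope monotonicity for convex functions; the real task is to pick $t_1$ so that the secant slope beats $h(t)/(2t)$. Since $h$ is convex and differentiable on $[t_0,\infty)$, its difference quotient $s\mapsto (h(t)-h(s))/(t-s)$ is nondecreasing in $s$ for $s<t$. It is therefore bounded above by its limit as $s\uparrow t$, which gives
\[
h'(t)\ge \frac{h(t)-h(t_1)}{t-t_1}\qquad\text{for any } t_0\le t_1<t.
\]
This holds for every admissible $t_1$, so the first inequality imposes no constraint on the choice of $t_1$.

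For the second inequality, I would first use $h(t)\to\infty$ to choose $t_1>\max\{t_0,0\}$ with $h(t_1)>0$. The target inequality is then
\[
2t\bigl(h(t)-h(t_1)\bigr)\ge (t-t_1)\,h(t),
\qquad\text{equivalently}\qquad
(t+t_1)\,h(t)\ge 2t\,h(t_1).
\]
Because $t+t_1>0$, it suffices to have $h(t)\ge 2t\,h(t_1)/(t+t_1)$. The right-hand side is at most $2h(t_1)$, so it is enough that $h(t)\ge 2h(t_1)$ for all $t>t_1$.

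Next I need the lower bound $h\ge 2h(t_1)$ for all $t>t_1$, and not merely for large $t$, so $t_1$ must be chosen with some care. The point is that a convex function tending to $+\infty$ is eventually nondecreasing, since its derivative is monotone and must become positive. I would therefore take $t_1$ in the region where $h$ is increasing. The difficulty is that $h(t)$ may stay below $2h(t_1)$ on a stretch just after $t_1$.

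To get around this I would use two points. Fix $t_*$ with $h'(t_*)>0$ and $h(t_*)>0$. Then take $t_1>t_*$ large enough that
\[
h(t_1)-h(t_*)\ge \tfrac12 h(t_1)\quad\text{and}\quad t_1\ge 2t_*,
\]
which is possible since $h(t_1)\to\infty$ while $h(t_*)$ stays fixed. For $t>t_1$, the slope monotonicity from the first step, applied with base point $t_*$, gives
\[
\frac{h(t)-h(t_1)}{t-t_1}\ge \frac{h(t_1)-h(t_*)}{t_1-t_*}\ge \frac{h(t_1)}{2t_1}.
\]
On its own this bound falls short. I would instead compare directly with $h(t)/(2t)$ by applying slope monotonicity relative to $t_*$:
\[
\frac{h(t)-h(t_1)}{t-t_1}\ge \frac{h(t)-h(t_*)}{t-t_*}.
\]
This is valid because $t_*<t_1<t$, and the chord over $[t_1,t]$ has slope at least that of the chord over $[t_*,t]$. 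It then remains to show
\[
2t\bigl(h(t)-h(t_*)\bigr)\ge (t-t_*)\,h(t),
\qquad\text{that is,}\qquad
(t+t_*)\,h(t)\ge 2t\,h(t_*).
\]
This holds once $h(t)\ge 2h(t_*)$. So the proof reduces to choosing $t_1$ such that $h\ge 2h(t_*)$ on $(t_1,\infty)$. Since $h$ is nondecreasing beyond $t_*$ and tends to infinity, I would take $t_1$ to be any point where $h(t_1)\ge 2h(t_*)$. This completes the plan, and the step I expect to be the main obstacle is arranging the inequality uniformly for all $t>t_1$ rather than only eventually; the two-point device with $t_*$ is designed to remove exactly that difficulty.
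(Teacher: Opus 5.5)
Your final argument is correct, but it derives the inequality by a different mechanism than the paper, even though the choice of points is the same: your condition $h(t_1)\ge 2h(t_*)$ is exactly the paper's $h(s)\le\frac12 h(t_1)$.

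The paper uses this condition only at $t_1$, to get $h'(t_1)\ge h(t_1)/(2t_1)$. The tangent line at $t_1$ then gives $h(t)\ge\frac{t+t_1}{2t_1}h(t_1)$, and one line of algebra yields $\frac{h(t)-h(t_1)}{t-t_1}\ge\frac{h(t)}{t+t_1}\ge\frac{h(t)}{2t}$.

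You instead use that $h$ is increasing beyond $t_*$ to get $h(t)\ge 2h(t_*)$ for every $t>t_1$. This gives $\frac{h(t)-h(t_*)}{t-t_*}\ge\frac{h(t)}{2t}$ directly, and the three-chord inequality then moves the base point from $t_*$ to $t_1$. Your route avoids the tangent-line estimate and the algebra. The paper's route yields, as by-products, the explicit linear growth $h(t)\ge\frac{t+t_1}{2t_1}h(t_1)$ and the slightly sharper bound $h(t)/(t+t_1)$.

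Two small remarks. First, the step from $h(t)\ge 2h(t_*)$ to $(t+t_*)h(t)\ge 2t\,h(t_*)$ needs $t_*\ge 0$, because $(t+t_*)\cdot 2h(t_*)=2t\,h(t_*)+2t_*h(t_*)$, and you never impose this. Simply choose $t_*>\max\{t_0,0\}$, which is possible since $h$ and $h'$ are both eventually positive.

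Second, the bound you discarded as falling short, $\frac{h(t)-h(t_1)}{t-t_1}\ge\frac{h(t_1)}{2t_1}$, actually suffices. It gives $h(t)\ge\frac{t+t_1}{2t_1}h(t_1)$, hence $(t+t_1)h(t)\ge\frac{(t+t_1)^2}{2t_1}h(t_1)\ge 2t\,h(t_1)$, since $(t+t_1)^2\ge 4tt_1$. This is essentially the paper's proof.
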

\begin{proof}
Fix $s>\max\{t_0,0\}$.
Since $h(t)\to\infty$, we may choose $t_1>s$ such that
\[
h(t_1)>0, \qquad h(s)\le \frac12\, h(t_1).
\]
By convexity,
\[
h'(t_1) \ge \frac{h(t_1)-h(s)}{t_1-s} \ge \frac{h(t_1)}{2t_1}.
\]
Hence, for $t>t_1$,
\[
h(t) \ge h(t_1)+(t-t_1)h'(t_1) \ge \frac{t+t_1}{2t_1}\,h(t_1).
\]
In particular,
\[
\frac{h(t_1)}{h(t)} \le \frac{2t_1}{t+t_1},
\]
and therefore
\[
\frac{h(t)-h(t_1)}{t-t_1}= \frac{h(t)}{t-t_1} \left(1-\frac{h(t_1)}{h(t)}\right) \ge
\frac{h(t)}{t+t_1} \ge \frac{h(t)}{2t}.
\]
Finally, convexity gives
\[
h'(t)\ge \frac{h(t)-h(t_1)}{t-t_1},
\]
which proves the result.
\end{proof}

\begin{lemma}\label{prop2.17}
Suppose that $\sqrt F$ is convex on $[t_0,\infty)$ for some $t_0\in\mathbb R$.
Then there exists $t_1>0$ such that $t\mapsto f(t)/t$ is nondecreasing on $[t_1,\infty)$.
Consequently, there exists $C\ge 0$ such that
\begin{equation}\label{eq2.18}
f(a+b)\ge f(a)+f(b)-C, \qquad a,\,b\ge0.
\end{equation}
\end{lemma}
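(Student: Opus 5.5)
Write $h=\sqrt F$ on $[t_0,\infty)$; after adding a constant to $F$ if needed we may take $h>0$ there. Since $f$ is locally Lipschitz, $f=2hh'$ with $h'$ locally Lipschitz, so for almost every $t\ge t_0$
\[
f'=2(h')^2+2hh''\ge 2(h')^2 .
\]
We show that $f'(t)\,t\ge f(t)$ for almost every large $t$. This gives $(f(t)/t)'=(tf'-f)/t^2\ge0$ almost everywhere, and since $f(t)/t$ is locally Lipschitz for $t>0$, it is nondecreasing on $[t_1,\infty)$.

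\emph{Key step.} We compare $t(h')^2$ with $hh'$. By Lemma~\ref{lem_convex} applied to $h$ (we have $h\to\infty$ because $F\to\infty$), there is $t_1$ such that $h'(t)\ge h(t)/(2t)$ for $t>t_1$. Then
\[
tf'\ge 2t(h')^2\ge 2t\,h'\cdot\frac{h}{2t}=hh'=\tfrac12 f.
\]
This only gives $tf'\ge f/2$, so we must lose less. The fix is to sharpen the lemma's proof: there $t_1$ is chosen with $h(s)\le\tfrac12 h(t_1)$, and taking instead $h(s)\le\eta\,h(t_1)$ yields
\[
h'(t)\ge \frac{h(t)-h(t_1)}{t-t_1}\ge(1-\eta')\frac{h(t)}{t}
\]
for large $t$. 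The first inequality is convexity, and in the second $h(t_1)/h(t)\to0$ while $t/(t-t_1)\to1$. Hence $h'\ge c\,h/t$ for every $c<1$ and all large $t$, and
\[
tf'\ge 2t(h')^2\ge 2c\,hh'=c f .
\]
This is still not $\ge f$. The term $2hh''$ must therefore be used, or the monotonicity argued directly.

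\emph{Direct route via $f/t$.} We have $f(t)/t=2h(t)h'(t)/t$. Since $h'$ is nondecreasing, it suffices that $h(t)/t$ is eventually nondecreasing. Its derivative is $(th'-h)/t^2$, and $th'(t)-h(t)$ is nondecreasing in $t$: for $s<t$,
\[
(th'(t)-h(t))-(sh'(s)-h(s))=t h'(t)-sh'(s)-\int_s^t h'\ge t h'(t)-sh'(s)-(t-s)h'(t)=s\,(h'(t)-h'(s))\ge0 .
\]
It remains to show that $th'(t)-h(t)$ becomes nonnegative. If instead $th'-h<0$ for all large $t$, then $(h/t)'<0$, so $h(t)\le Ct$ eventually. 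Then
\[
\int^\infty \frac{dt}{\sqrt F}=\int^\infty\frac{dt}{h}=\infty,
\]
contradicting the Keller--Osserman condition \eqref{eq1.1}. Therefore $h/t$ is nondecreasing on some $[t_1,\infty)$ with $t_1>0$ and $h/t>0$ there, and $f/t=2(h/t)h'$ is a product of two nonnegative nondecreasing functions, hence nondecreasing. This direct route replaces the derivative estimate and avoids the obstacle above.

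\emph{Superadditivity.} Put $\phi(t)=f(t)/t$, nondecreasing on $[t_1,\infty)$. If $a,b\ge t_1$, then
\[
f(a+b)=a\,\phi(a+b)+b\,\phi(a+b)\ge a\phi(a)+b\phi(b)=f(a)+f(b).
\]
If, say, $b<t_1$, then $f(b)\le f(t_1)$ and $f(a+b)\ge f(a)$ by monotonicity of $f$, so $f(a+b)\ge f(a)+f(b)-f(t_1)$. The same bound holds when both $a,b<t_1$. Thus \eqref{eq2.18} holds with $C=f(t_1)$.
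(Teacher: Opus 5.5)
Your final argument (the \emph{direct route}) is correct and is essentially the paper's proof: both show that $h/t$ is eventually nondecreasing, write $f/t=2(h/t)h'$ as a product of nonnegative nondecreasing functions, and then derive \eqref{eq2.18} with $C=f(t_1)$ exactly as you do. The only difference is in the sign step: the paper first gets $h(t)/t\to\infty$ from the Keller--Osserman condition and then applies the chord inequality $h'(t)\ge (h(t)-h(t_0))/(t-t_0)$, whereas you show that $th'-h$ is nondecreasing (which needs $s\ge0$, so work on $t\ge\max\{t_0,0\}$) and rule out that it stays negative via the same Keller--Osserman contradiction; the earlier \emph{key step} attempts are unnecessary and can be deleted.
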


\begin{proof}
Let $h:=\sqrt{F}$.
The monotonicity of $h$ gives
\[
\frac{t}{2h(t)}
\le \int_{t/2}^t\frac{ds}{h(s)},
\]
and hence $h(t)/t\to\infty$ by the Keller--Osserman condition for $f$.
We may assume $t_0>0$.
For $t>t_0$, convexity yields
\[
h'(t)\ge \frac{h(t)-h(t_0)}{t-t_0},
\]
and hence
\[
th'(t)-h(t) \ge \frac{t_0 h(t)-th(t_0)}{t-t_0}=\frac{tt_0}{t-t_0}\left( \frac{h(t)}{t}-\frac{h(t_0)}{t_0}\right)>0
\]
for all sufficiently large $t$, since $h(t)/t \to \infty$.
Thus $h(t)/t$ is nondecreasing for all sufficiently large $t$.
Moreover, $h'$ is nondecreasing by convexity and is positive for all sufficiently large $t$.
Therefore
\[
\frac{f(t)}t=2\,\frac{h(t)}{t}\, h'(t)
\]
is nondecreasing for all sufficiently large $t$, say for all $t\ge t_1$.
If $a, \,b\ge t_1$, then
\[
f(a)\le \frac{a}{a+b} \,f(a+b), \qquad f(b)\le \frac{b}{a+b}\,f(a+b),
\]
and therefore $f(a)+f(b)\le f(a+b)$.
If $0\le a<t_1$, monotonicity of $f$ gives
\[
f(a)+f(b)-f(a+b)\le f(a)\le f(t_1).
\]
By symmetry, this is true when $0\le b<t_1$.
Hence \eqref{eq2.18} holds with $C=f(t_1)$.
\end{proof}

Condition~\eqref{eq1.6} implies the profile condition used by Bandle and Ess\'en \cite{BE}.

\begin{lemma}	\label{lem2.24}
Assume
\[
\limsup_{t\to\infty} \psi(t)\frac{f(t)}{\sqrt{F(t)}}<\infty.
\]
Then, for every $0<\beta<1$,
\[
\liminf_{t\to\infty}\frac{\psi(\beta t)}{\psi(t)}>1.
\]
\end{lemma}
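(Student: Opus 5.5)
We want to show $\psi(\beta t)/\psi(t)$ stays bounded away from $1$ from above. The plan is to differentiate $\log\psi$ and use the hypothesis to bound the logarithmic derivative from below in a scale-invariant way. By hypothesis there are $M>0$ and $T$ with $\psi(t)f(t)\le M\sqrt{F(t)}$ for $t\ge T$. We may also take $F>0$ there.

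The first step is to note that
\[
-\frac{d}{dt}\log\psi(t)=\frac{1}{\sqrt{2F(t)}\,\psi(t)}.
\]
So it suffices to show that $t\sqrt{2F(t)}\,\psi(t)\le K$ for large $t$, with some constant $K$. Then for $t$ large enough that $\beta t\ge T$,
\[
\log\frac{\psi(\beta t)}{\psi(t)}=\int_{\beta t}^{t}\frac{ds}{\sqrt{2F(s)}\,\psi(s)}\ge\int_{\beta t}^t\frac{ds}{Ks}=\frac{1}{K}\log\frac1\beta .
\]
This yields $\liminf_{t\to\infty}\psi(\beta t)/\psi(t)\ge\beta^{-1/K}>1$.

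The key step, and the main obstacle, is the bound $t\sqrt{F(t)}\,\psi(t)\le K$. I will obtain it from the hypothesis through a differential inequality for $\Phi:=\sqrt{F}\,\psi$. Since $\psi'=-1/\sqrt{2F}$,
\[
\Phi'=\frac{f\psi}{2\sqrt F}-\frac1{\sqrt2}\le \frac M2-\frac1{\sqrt2}.
\]
This alone does not give a useful bound. A better route is to set $g(t):=\sqrt{F(t)}$ and observe that $g'=f/(2\sqrt F)$, so the hypothesis reads $\psi g'\le M/2$. For $s\ge t$ we have $\psi(s)\le\psi(t)$, but we need the reverse direction, so I integrate over $[T,t]$ instead. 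On this interval $\psi(s)\ge\psi(t)$, hence $g'(s)\le M/(2\psi(s))\le M/(2\psi(t))$. Integrating gives
\[
\sqrt{F(t)}\le \sqrt{F(T)}+\frac{M(t-T)}{2\psi(t)} .
\]
Multiplying by $\psi(t)$,
\[
\sqrt{F(t)}\,\psi(t)\le \sqrt{F(T)}\,\psi(t)+\frac{M}{2}\,t .
\]
Here $\psi(t)\to0$, so this bound is of order $t$ rather than the constant $K/t$ we need, and it is not sufficient.

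I would therefore try to extract a bound on $t\,\Phi$ from the hypothesis more carefully, or else avoid this route altogether. One fix is to use the integral directly, writing $\psi(\beta t)-\psi(t)=\int_{\beta t}^t ds/\sqrt{2F(s)}\ge (1-\beta)t/\sqrt{2F(t)}$, by monotonicity of $F$. It then remains to bound $\psi(t)\le K\,t/\sqrt{F(t)}$, which is the same estimate. I expect this to follow by applying the hypothesis at dyadic scales. From $\psi g'\le M/2$ we get $(\log g)'\le M/(2\psi g)$. Combining this with $-(\log\psi)'=1/(\sqrt2\, g\psi)$ gives $(\log g)'\le -\tfrac{M}{\sqrt2}(\log\psi)'$. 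Hence $g\,\psi^{M/\sqrt2}$ is nonincreasing, so $g$ grows at most like $\psi^{-M/\sqrt2}$. That is, $\psi\ge c\,g^{-\sqrt2/M}$.

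Turning this into the needed upper bound on $\psi$ in terms of $t/g(t)$ is the delicate point. I would combine it with $\psi(t)\le\sum_k 2^k t/g(2^kt)$ and the growth control of $g$ between dyadic points to close the estimate.
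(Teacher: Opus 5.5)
As written, your proposal does not prove the lemma. It ends by calling the bound $\psi(t)\le Kt/\sqrt{F(t)}$ ``the delicate point'' and defers it to a dyadic argument that is never carried out. The cause is a misstatement of the target. Your log-derivative step needs $\sqrt{2F(s)}\,\psi(s)\le Ks$ for large $s$; this is what $\int_{\beta t}^t \frac{ds}{\sqrt{2F(s)}\psi(s)}\ge\int_{\beta t}^t\frac{ds}{Ks}$ actually uses. In the text, however, you state it as $t\sqrt{F(t)}\,\psi(t)\le K$, which asks for decay like $1/t$. That stronger bound is false already for $f(t)=t^p$, where $\sqrt{F(t)}\,\psi(t)=\frac{\sqrt2}{p-1}\,t$. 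As a result you reject the inequality
\[
\sqrt{F(t)}\,\psi(t)\le \sqrt{F(T)}\,\psi(t)+\frac M2\,t,
\]
which is exactly what is needed. Since $\psi(t)\le\psi(T)$, it gives $\sqrt{F(t)}\,\psi(t)\le\bigl(\sqrt{F(T)}\,\psi(T)+\tfrac M2\bigr)t$ for $t\ge\max\{T,1\}$. Your log-derivative computation then yields $\liminf_{t\to\infty}\psi(\beta t)/\psi(t)\ge\beta^{-1/K}>1$. Drop the closing paragraph. Besides being unfinished, it inverts $\sqrt F\,\psi^{M/\sqrt2}\le C$ in the wrong direction: this gives $\psi\le C'F^{-1/(\sqrt2 M)}$, not $\psi\ge c\,g^{-\sqrt2/M}$. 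A power bound of that kind does not by itself give the linear estimate you need.

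With that repair your argument is correct, and it differs from the paper's only in how the linear bound is obtained and used. The paper gets it in one line from the monotonicity of $f$: $F(t)\le F(0)+tf(t)\le 2tf(t)$ for large $t$, so $\psi\sqrt F/t=(\psi f/\sqrt F)\,(F/(tf))\le C$. It then uses precisely your ``direct integral'' variant, $\psi(\beta t)-\psi(t)\ge(1-\beta)t/\sqrt{2F(t)}$, to get $\liminf\ge 1+(1-\beta)/(\sqrt2\,C)$. Your integration of $(\sqrt F)'\le M/(2\psi)$ over $[T,t]$ reaches the same linear bound using only that $\psi$ is decreasing, not that $f$ is nondecreasing. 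Given the same bound $\psi\sqrt F\le Ct$, your log-derivative route yields $\beta^{-1/(\sqrt2 C)}$. This dominates the paper's $1+(1-\beta)/(\sqrt2\,C)$ and blows up as $\beta\to0$.
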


\begin{proof}
Since $f$ is nondecreasing,
\[
F(t)\le F(0)+t f(t),
\]
and hence $F(t)\le 2t f(t)$ for all sufficiently large $t$. Thus
\[
\frac{\psi(t)\sqrt{F(t)}}{t} = \psi(t)\frac{f(t)}{\sqrt{F(t)}}\frac{F(t)}{t f(t)} \le C
\]
for all sufficiently large $t$.
Consequently,
\[
\frac{\psi(\beta t)}{\psi(t)}-1 = \frac1{\psi(t)}\int_{\beta t}^{t}\frac{ds}{\sqrt{2F(s)}}
\ge \frac{(1-\beta)t}{\sqrt{2F(t)}\,\psi(t)} \ge \frac{1-\beta}{C\sqrt2}>0
\]
for all sufficiently large $t$.
\end{proof}

The next example shows that even together, \eqref{eq1.5} and \eqref{eq1.6} do not imply superadditivity up to a constant.
\begin{lemma}	\label{lem6.10}
For every $0<\alpha<1$, there exists $F\in C^\infty(\mathbb R)$, $F>0$, such that $f:=F'$ vanishes on $(-\infty,0]$, is strictly increasing on $(0,\infty)$, and satisfies
\begin{equation}\label{eq2.19}
\int^\infty F(t)^{-\alpha/2}\,dt<\infty, \qquad
F(t)^{-\alpha/2}f(t)\quad\text{nondecreasing},
\end{equation}
and
\begin{equation}\label{eq0732thu}
\limsup_{t\to\infty}\psi(t)\, \frac{f(t)}{\sqrt{F(t)}}<\infty, \qquad \psi(t):=\int_t^\infty \frac{ds}{\sqrt{2F(s)}},
\end{equation}
but
\begin{equation}\label{eq2.20}
\inf_{a,b\ge0} \left\{f(a+b)-f(a)-f(b)\right\}=-\infty.
\end{equation}
\end{lemma}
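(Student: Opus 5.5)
The plan is to build $F$ through the auxiliary function $G:=F^{1-\alpha/2}$. As observed after \eqref{eq3.1}, monotonicity of $F^{-\alpha/2}f$ is equivalent to convexity of $G$. Write $p:=2/(2-\alpha)\in(1,2)$, so that $F=G^p$ and
\[
f=pG^{p-1}G',\qquad F^{-\alpha/2}=G^{-q},\quad q:=\tfrac{\alpha}{2-\alpha}=p-1\in(0,1).
\]
I will take $G\in C^\infty$ equal to a positive constant on $(-\infty,0]$ and convex, with $G'>0$ on $(0,\infty)$. This gives $F>0$, $f=0$ on $(-\infty,0]$, and $f$ strictly increasing on $(0,\infty)$, since both $G^{p-1}$ and $G'$ are nondecreasing and $G$ is strictly increasing there. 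The monotonicity part of \eqref{eq2.19} then holds automatically.

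The failure of superadditivity will come from long intervals on which $G$ is affine. Choose scales $L_k\to\infty$ very sparse and slopes $B_k\to\infty$ very fast, and arrange that $G(t)=A_k+B_kt$ on $[L_k,2L_k]$ with $0\le A_k\le G(0)$. On that interval $f(t)=pB_k(A_k+B_kt)^{p-1}$, and since $p-1<1$,
\[
f(2L_k)-2f(L_k)\le pB_k^{p}L_k^{p-1}\bigl(2^{p-1}-2\bigr)+O\bigl(B_kA_k^{p-1}\bigr)\longrightarrow-\infty .
\]
Taking $a=b=L_k$ gives \eqref{eq2.20}.

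Between $2L_{k-1}$ and $L_k$, $G$ follows a smoothed convex interpolation. Its slope rises from $B_{k-1}$ to $B_k$, and it reaches the value $A_k+B_kL_k$ at $L_k$. This is compatible with convexity once $L_k\gg L_{k-1}$: the required average slope $(B_kL_k-G(2L_{k-1}))/(L_k-2L_{k-1})$ is then at most $B_k$. The corners are smoothed by standard mollification of $G'$, which preserves convexity and the affine pieces away from their ends.

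Integrability in \eqref{eq2.19} amounts to $\int^\infty G^{-q}<\infty$. On $[L_k,2L_k]$ the contribution is at most $C B_k^{-q}L_k^{1-q}$, and on a transition interval $G\ge G(2L_{k-1})$ grows at least linearly with slope $B_{k-1}$. Choosing $B_k$ inductively large relative to $L_k$ makes all these contributions summable.

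The main obstacle is \eqref{eq0732thu}. In terms of $G$ it reads
\[
\psi(t)\,\frac{f(t)}{\sqrt{F(t)}}=\frac{p}{\sqrt2}\,G(t)^{p/2-1}G'(t)\int_t^\infty G(s)^{-p/2}\,ds\le C .
\]
On an affine piece with $A_k$ small, the integral from $t$ to $2L_k$ is comparable to $B_k^{-p/2}t^{1-p/2}$, which exactly matches $G^{1-p/2}/G'$. The difficulty is at the transitions, where $G'$ increases while $G$ has not yet caught up. I would first try to control this by making the transitions \emph{geometric}. Concretely, I would impose the bounded-index condition $tG'(t)\le CG(t)$ everywhere, which forces the slope to increase through many dyadic steps rather than in one jump. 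I would also require a lower growth $G(s)\ge G(t)(s/t)^{\gamma}$ for $s\ge t$ in transition zones, with some $\gamma>2/p$. Then $\int_t^\infty G^{-p/2}\le C\,tG(t)^{-p/2}$, and
\[
G^{p/2-1}G'\cdot tG^{-p/2}=\frac{tG'}{G}\le C .
\]
The affine pieces have local index $1<2/p$, so there the tail estimate must instead use the fast growth on the next transition. Since $[L_k,2L_k]$ is only one dyadic block, that comparison should be harmless, but this bookkeeping is the step I expect to require the most care.
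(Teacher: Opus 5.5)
Your architecture matches the paper's: your $G=F^{1-\alpha/2}$ is its $h$, convexity of $G$ makes $F^{-\alpha/2}f=pG'$ nondecreasing, and superadditivity is meant to fail on affine stretches of $G$ because $x\mapsto x^{q}$, $q=p-1<1$, is concave. The gap is in how you realize that failure.

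\textbf{The intercept condition is impossible.} You require $G(t)=A_k+B_kt$ on $[L_k,2L_k]$ with $0\le A_k\le G(0)$. This cannot hold for a convex $G$ once $B_k\to\infty$. The line $A_k+B_kt$ supports $G$, so for any fixed $t_*>0$ you get $A_k\le G(t_*)-B_kt_*\to-\infty$.

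Your compatibility check is also reversed. The inequality
\[
\frac{B_kL_k-G(2L_{k-1})}{L_k-2L_{k-1}}\le B_k
\]
is equivalent to $G(2L_{k-1})\ge 2B_kL_{k-1}$. Since $G'\le B_{k-1}$ on $[0,2L_{k-1}]$, you have $G(2L_{k-1})\le G(0)+2B_{k-1}L_{k-1}$. So the inequality fails as soon as $2(B_k-B_{k-1})L_{k-1}>G(0)$, whatever $L_k$ is. Enlarging $L_k$ only makes the required average slope tend to $B_k$ from above.

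\textbf{Consequence for \eqref{eq2.20}.} With the true, negative intercept, set $x_k:=G(L_k)/(B_kL_k)$. Then exactly
\[
f(2L_k)-2f(L_k)=pB_k^{p}L_k^{q}\bigl((1+x_k)^{q}-2x_k^{q}\bigr),
\]
and this is negative only when $x_k>1/(2^{1/q}-1)$. Your affine piece begins at $L_k$, right after the transition, so nothing makes $x_k$ close to $1$. In fact your own requirement $G(s)\ge G(t)(s/t)^{\gamma}$ on the transition up to $L_k$ forces $L_kG'(L_k)\ge\gamma G(L_k)$, that is, $x_k\le 1/\gamma<1$. So your displayed estimate does not establish \eqref{eq2.20}.

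\textbf{The missing idea.} You need to test deep inside a \emph{long} affine stretch, where the intercept (of order slope times the start of the stretch) is negligible. The paper takes $G'=b_n$ on $[2a_n,a_{n+1}]$ with $a_n=R^n$, $R>2M$, and tests at $a=b=t_n=Ma_n$. Then $G(t_n)\ge b_n(t_n-2a_n)$ gives $x_n\ge 1-2/M$. Since $x\mapsto 2x^{q}-(1+x)^{q}$ is increasing, this yields $2f(t_n)-f(2t_n)\ge c\,b_n^{p}t_n^{q}\to\infty$, provided $M$ satisfies $2(1-2/M)^{q}>(2-2/M)^{q}$.

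\textbf{The index bookkeeping is also unnecessary.} The paper uses a fixed ratio $R$ and $b_n=a_n^{2/\alpha-1}$, so that $G\ge c\,a_n^{2/\alpha}$ and $G'\le b_n$ on $[a_n,a_{n+1}]$. Then both the integrability in \eqref{eq2.19} and \eqref{eq0732thu} reduce to geometric series in $a_n$. Very sparse scales and very fast slopes are not needed, and they work against you:
\begin{itemize}
\item On the transition $[2L_{k-1},L_k]$, your linear lower bound only gives $\int G^{-q}\lesssim B_{k-1}^{-q}L_k^{1-q}$. This is controlled by $B_{k-1}$ relative to $L_k$, not by taking $B_k$ large.
\item On long transitions, power-type growth gives integrability of $G^{-q}$ only if $\gamma>1/q=(2-\alpha)/\alpha$. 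That is strictly stronger than your $\gamma>2/p=2-\alpha$.
\end{itemize}
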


\begin{proof}
Fix $0<\alpha<1$. Choose $M>2$ so large that
\begin{equation}\label{eq2.21}
2\left(1-\frac2M\right)^{\alpha/(2-\alpha)} - \left(2-\frac2M\right)^{\alpha/(2-\alpha)}>0;
\end{equation}
this is possible since the left-hand side tends to $2-2^{\alpha/(2-\alpha)}>0$.
Choose $R>2M$ and set
\[
a_n:=R^n, \qquad b_n:=a_n^{(2-\alpha)/\alpha}, \qquad n\ge0.
\]
Let $\theta\in C^\infty(\mathbb R)$ be nondecreasing, with $\theta=0$ on $(-\infty,1/4]$ and $\theta=1$ on $[3/4,\infty)$.
Define a nondecreasing $\kappa\in C^\infty(\mathbb R)$ as follows.
Set $\kappa=0$ on $(-\infty,0)$, and choose $\kappa$ on $[0,2]$ so that
\[
\kappa(0)=0,\qquad \kappa(t)>0\quad(t>0),\qquad \kappa=1\quad\text{near }\;2,
\]
with all derivatives vanishing at $0$.
For $n\ge1$, set
\[
\kappa(t) = b_{n-1}+(b_n-b_{n-1})\, \theta\left(\frac{t-a_n}{a_n}\right),\qquad a_n\le t\le2a_n,
\]
and
\[
\kappa(t)=b_n, \qquad 2a_n\le t\le a_{n+1},\quad n\ge0.
\]
These definitions agree smoothly at the endpoints.
Set
\[
h(t):=1+\int_0^t\kappa(s)\,ds, \qquad t\in\mathbb R.
\]
Then $h\in C^\infty(\mathbb R)$, $h>0$, and $h'=\kappa$ is nonnegative and nondecreasing. Define
\[
F:=h^{2/(2-\alpha)}.
\]
Then
\[
f=F'= \frac{2}{2-\alpha}\, h^{\alpha/(2-\alpha)} h'.
\]
Thus $f=0$ on $(-\infty,0]$, while for $t>0$,
\[
f'(t) = \frac{2}{2-\alpha}\,h^{\alpha/(2-\alpha)}
\left(\frac{\alpha}{2-\alpha} \frac{(h')^2}{h} + h'' \right)>0.
\]
Moreover,
\[
F^{-\alpha/2}f= \frac{2}{2-\alpha}h',
\]
which is nondecreasing.
For $n\ge1$,
\[
h(a_n) \ge \int_{2a_{n-1}}^{a_n} \kappa(t)\,dt \ge 
b_{n-1}(a_n-2a_{n-1})=(R-2)R^{-2/\alpha}a_n^{2/\alpha}.
\]
Hence
\[
\int_{a_n}^{a_{n+1}} h(t)^{-\alpha/(2-\alpha)}\,dt \le (a_{n+1}-a_n)
\left((R-2)R^{-2/\alpha}a_n^{2/\alpha}\right)^{-\alpha/(2-\alpha)}= C a_n^{-\alpha/(2-\alpha)}.
\]
Since $a_n=R^n$, the resulting geometric series converges.
As
\[
F^{-\alpha/2}=h^{-\alpha/(2-\alpha)},
\]
this proves the first condition in \eqref{eq2.19}.

Set $t_n:=Ma_n$. Since $R>2M$,
\[
t_n,\,2t_n\in[2a_n,a_{n+1}],
\]
where $h'=b_n$. Thus
\[
h(2t_n)=h(t_n)+b_n t_n.
\]
Writing
\[
x_n:=\frac{h(t_n)}{b_n t_n},
\]
we obtain
\[
h(t_n)=b_n t_n x_n,\qquad h(2t_n)=b_n t_n (x_n+1).
\]
Also,
\[
h(t_n)=h(2a_n)+ \int_{2a_n}^{t_n} h'(t)\,dt \ge  b_n(t_n-2a_n),
\]
so
\[
x_n\ge \frac{t_n-2a_n}{t_n} =1-\frac{2a_n}{Ma_n}= 1-\frac{2}{M}>0.
\]
Since $0<\alpha/(2-\alpha)<1$, the function
\[
x \mapsto 2x^{\alpha/(2-\alpha)} -(x+1)^{\alpha/(2-\alpha)}
\]
is strictly increasing on $(0,\infty)$.
Therefore,
\[
\begin{aligned}
2f(t_n)-f(2t_n) &= \frac{2}{2-\alpha}\, b_n 
\left\{ 2h(t_n)^{\alpha/(2-\alpha)} - h(2t_n)^{\alpha/(2-\alpha)} \right\} \\
&= \frac{2}{2-\alpha}\, b_n^{2/(2-\alpha)} t_n^{\alpha/(2-\alpha)}
\left\{ 2x_n^{\alpha/(2-\alpha)} -(x_n+1)^{\alpha/(2-\alpha)} \right\} \\
&\ge \frac{2}{2-\alpha}\, b_n^{2/(2-\alpha)} t_n^{\alpha/(2-\alpha)}
\left\{ 2\left(1-\frac2M\right)^{\alpha/(2-\alpha)} - \left(2-\frac2M\right)^{\alpha/(2-\alpha)}
\right\}\\
&= \frac{2}{2-\alpha}\, M^{\alpha/(2-\alpha)}
\left\{ 2\left(1-\frac2M\right)^{\alpha/(2-\alpha)} - \left(2-\frac2M\right)^{\alpha/(2-\alpha)}
\right\} R^{\left(\frac{2}{\alpha}+\frac{\alpha}{2-\alpha}\right)n}.
\end{aligned}
\]
The bracket is positive by \eqref{eq2.21}, while the last factor tends to infinity.
Hence
\[
f(2t_n)-2f(t_n)\to-\infty,
\]
which proves \eqref{eq2.20}.

Finally, for $t\in[a_n,a_{n+1}]$, we have
\[
h(t)\ge h(a_n)\ge c a_n^{2/\alpha}, \qquad h'(t)\le b_n=a_n^{(2-\alpha)/\alpha}.
\]
Thus
\begin{equation}	\label{eq0654thu}
\frac{f(t)}{\sqrt{F(t)}} \le C a_n^{\frac{2-\alpha}{\alpha} -\frac{2(1-\alpha)}{\alpha(2-\alpha)}}
= C a_n^{\frac{\alpha^2-2\alpha+2}{\alpha(2-\alpha)}}.
\end{equation}
Moreover, for $m\ge n$ and $s\in[a_m,a_{m+1}]$,
\[
h(s)\ge c a_m^{2/\alpha},
\]
and hence
\[
\int_{a_m}^{a_{m+1}}\frac{ds}{\sqrt{F(s)}}
= \int_{a_m}^{a_{m+1}} h(s)^{-\frac1{2-\alpha}}\,ds
\le C a_m^{1-\frac{2}{\alpha(2-\alpha)}}
=C a_m^{-\frac{\alpha^2-2\alpha+2}{\alpha(2-\alpha)}}.
\]
Since $a_m=R^m$, summing the geometric series gives, for $t\in[a_n,a_{n+1}]$,
\begin{equation}	\label{eq0656thu}
\psi(t) \le
C a_n^{-\frac{\alpha^2-2\alpha+2}{\alpha(2-\alpha)}}.
\end{equation}
Combining \eqref{eq0654thu} and \eqref{eq0656thu} yields
\[
\psi(t)\,\frac{f(t)}{\sqrt{F(t)}}\le C
\]
for all sufficiently large $t$, proving \eqref{eq0732thu}.
\end{proof}


\end{document}